\begin{document}
\newtheorem{thm}{Theorem}[section]
\newtheorem{prop}[thm]{Proposition}
\newtheorem{coro}[thm]{Corollary}
\newtheorem{conj}[thm]{Conjecture}
\newtheorem{example}[thm]{Example}
\newtheorem{lem}[thm]{Lemma}
\newtheorem{rem}[thm]{Remark}
\newtheorem{convention}[thm]{Convention}
\newtheorem{hy}[thm]{Hypothesis}
\newtheorem*{acks}{Acknowledgements}
\theoremstyle{definition}
\newtheorem{de}[thm]{Definition}
\newtheorem{bfproof}[thm]{{\bf Proof}}

\newcommand{\C}{{\mathbb{C}}}
\newcommand{\Z}{{\mathbb{Z}}}
\newcommand{\N}{{\mathbb{N}}}
\newcommand{\Q}{{\mathbb{Q}}}
\newcommand{\te}[1]{\textnormal{{#1}}}
\newcommand{\set}[2]{{
    \left.\left\{
        {#1}
    \,\right|\,
        {#2}
    \right\}
}}
\newcommand{\sett}[2]{{
    \left\{
        {#1}
    \,\left|\,
        {#2}
    \right\}\right.
}}
\def \<{{\langle}}
\def \>{{\rangle}}

\def \({ \left( }
\def \){ \right) }
\def \:{\mathopen{\overset{\circ}{
    \mathsmaller{\mathsmaller{\circ}}}
    }}
\def \;{\mathclose{\overset{\circ}{\mathsmaller{\mathsmaller{\circ}}}}}

\newcommand{\g}{{\frak g}}
\newcommand{\fg}{\g}
\newcommand{\fk}{{\frak k}}
\newcommand{\gc}{{\bar{\g}}}
\newcommand{\h}{{\frak h}}
\newcommand{\n}{{\frak n}}
\newcommand{\borel}{{\frak b}}
\newcommand{\cent}{{\frak c}}
\newcommand{\notc}{{\not c}}
\newcommand{\Loop}{{\cal L}}
\newcommand{\G}{{\cal G}}
\newcommand{\al}{\alpha}
\newcommand{\alck}{\al^\vee}
\newcommand{\be}{\beta}
\newcommand{\beck}{\be^\vee}
\newcommand{\ssl}{{\mathfrak{sl}}}
\newcommand{\rk}{{\mathrm{k}}}
\newcommand{\rd}{{\mathrm{d}}}
\newcommand{\ft}{{\mathfrak{t}}}
\newcommand{\rc}{{\mathrm{c}}}
\newcommand{\CE}{{\mathcal{E}}}
\newcommand{\CH}{{\mathcal{H}}}

\newcommand{\chg}{\check{\mathfrak g}}
\newcommand{\chh}{\check{\mathfrak h}}
\newcommand{\cha}{\check{a}}
\newcommand{\chb}{\check{b}}
\newcommand{\ka}{\mathfrak{a}}
\newcommand{\chka}{\check{\mathfrak{a}}}
\newcommand{\CL}{\mathcal L}

\newcommand{\rtu}{{\xi}}
\newcommand{\period}{{N}}
\newcommand{\half}{{\frac{1}{2}}}
\newcommand{\quar}{{\frac{1}{4}}}
\newcommand{\oct}{{\frac{1}{8}}}
\newcommand{\hex}{{\frac{1}{16}}}
\newcommand{\reciprocal}[1]{{\frac{1}{#1}}}
\newcommand{\inverse}{^{-1}}
\newcommand{\inv}{\inverse}
\newcommand{\SumInZm}[2]{\sum\limits_{{#1}\in\Z_{#2}}}
\newcommand{\uce}{{\mathfrak{uce}}}
\newcommand{\choice}[2]{{
\left[
\begin{array}{c}
{#1}\\{#2}
\end{array}
\right]
}}

\newcommand{\wh}[1]{\widehat{#1}}

\newlength{\dhatheight}
\newcommand{\dwidehat}[1]{%
    \settoheight{\dhatheight}{\ensuremath{\widehat{#1}}}%
    \addtolength{\dhatheight}{-0.35ex}%
    \widehat{\vphantom{\rule{1pt}{\dhatheight}}%
    \smash{\widehat{#1}}}}
\newcommand{\dhat}[1]{%
    \settoheight{\dhatheight}{\ensuremath{\hat{#1}}}%
    \addtolength{\dhatheight}{-0.35ex}%
    \hat{\vphantom{\rule{1pt}{\dhatheight}}%
    \smash{\hat{#1}}}}

\newcommand{\dwh}[1]{\dwidehat{#1}}

\newcommand{\dis}{\displaystyle}

\newcommand{\ot}{\otimes}

\newcommand{\overit}[2]{{
    \mathop{{#1}}\limits^{{#2}}
}}
\newcommand{\belowit}[2]{{
    \mathop{{#1}}\limits_{{#2}}
}}
\xymatrixcolsep{5pc}

\newcommand{\orb}[1]{|\mathcal{O}({#1})|}
\newcommand{\up}{_{(p)}}
\newcommand{\uq}{_{(q)}}
\newcommand{\upq}{_{(p+q)}}
\newcommand{\uz}{_{(0)}}
\newcommand{\uk}{_{(k)}}
\newcommand{\nsum}{\SumInZm{n}{\period}}
\newcommand{\ksum}{\SumInZm{k}{\period}}
\newcommand{\overN}{\reciprocal{\period}}
\newcommand{\df}{\delta\left( \frac{\xi^{k}w}{z} \right)}
\newcommand{\dfl}{\delta\left( \frac{\xi^{\ell}w}{z} \right)}
\newcommand{\ddf}{\left(D\delta\right)\left( \frac{\xi^{k}w}{z} \right)}

\newcommand{\ldfn}[1]{{\left( \frac{1+\xi^{#1}w/z}{1-{\xi^{#1}w}/{z}} \right)}}
\newcommand{\rdfn}[1]{{\left( \frac{{\xi^{#1}w}/{z}+1}{{\xi^{#1}w}/{z}-1} \right)}}
\newcommand{\ldf}{{\ldfn{k}}}
\newcommand{\rdf}{{\rdfn{k}}}
\newcommand{\ldfl}{{\ldfn{\ell}}}
\newcommand{\rdfl}{{\rdfn{\ell}}}

\newcommand{\kprod}{{\prod\limits_{k\in\Z_N}}}
\newcommand{\lprod}{{\prod\limits_{\ell\in\Z_N}}}
\newcommand{\E}{{\mathcal{E}}}
\newcommand{\F}{{\mathcal{F}}}
\newcommand{\fsl}{{\mathfrak{sl}}}

\newcommand{\wt}[1]{\widetilde{#1}}

\newcommand{\tar}{{\uce\left(\Loop\left(\gc\right)\right)}}
\newcommand{\U}{{\mathcal{U}}}
\newcommand{\qtar}{\U_\hbar(\wh{\g})}
\newcommand{\qtartemp}[1]{\U_{q^{#1}}(\wh{\g})}
\newcommand{\qptar}{\wh{\U}_\hbar(\wh{\g})}
\newcommand{\qhei}{{\U_\hbar( \wh{\h} )}}
\newcommand{\htart}[1]{{\mathcal{#1}}_\hbar(\wh\fg)}
\newcommand{\hheit}[1]{{\mathcal{#1}}_\hbar(\wh\h)}
\newcommand{\qptart}[1]{\wh{\mathcal{#1}}_\hbar(\wh\fg)}
\newcommand{\qphei}{{\wh\U_\hbar(\wh\h)}}
\newcommand{\qpheit}[1]{\wh{\mathcal{#1}}_\hbar(\hat\h)}
\newcommand{\symalg}{{\mathcal S}}

\newcommand{\qheip}{{\U_\hbar( \wh{\h}^+ )}}
\newcommand{\qhein}{{\U_\hbar( \wh{\h}^- )}}

\newcommand{\comp}{{\mathfrak{comp}}}

\newcommand{\ctimes}{{\widehat{\otimes}}}

\newcommand{\bigctimes}{{\ \widehat{\bigotimes}\ }}

\newcommand{\hctvs}[1]{Hausdorff complete linear topological vector space}

\newcommand{\act}{\triangleright}

\newcommand{\T}{\mathcal{T}}

\newcommand{\varprodright}{\mathop{\underrightarrow{\prod}}\limits}

\numberwithin{equation}{section}

\title{On quantum toroidal algebra of type $A_1$}

\author{Fulin Chen$^1$}
\address{School of Mathematical Sciences, Xiamen University,
 Xiamen, China 361005} \email{chenf@xmu.edu.cn}\thanks{$^1$Partially supported by the Fundamental
Research Funds for the Central Universities (No.20720190069) and NSF of China (No.11971397).}

\author{Naihuan Jing$^2$}
\address{Department of Mathematics, North Carolina State University, Raleigh, NC 27695,
USA}
\email{jing@math.ncsu.edu}
\thanks{$^2$Partially supported by NSF of China (No.11531004, No.11726016) and Simons Foundation (No.523868).}

\author{Fei Kong$^3$}
\address{Key Laboratory of Computing and Stochastic Mathematics (Ministry of Education), School of Mathematics and Statistics, Hunan Normal University, Changsha, China 410081} \email{kongmath@hunnu.edu.cn}
\thanks{$^3$Partially supported by NSF of China (No.11701183).}

 \author{Shaobin Tan$^4$}
 \address{School of Mathematical Sciences, Xiamen University,
 Xiamen, China 361005} \email{tans@xmu.edu.cn}
 \thanks{$^4$Partially supported by NSF of China  (Nos.11531004, 11971397).}

\subjclass[2010]{17B37, 17B67} \keywords{quantum toroidal algebra, triangular decomposition, Hopf algebra}

\begin{abstract} In this paper we introduce a new quantum algebra which  specializes to the $2$-toroidal Lie algebra of type $A_1$.
 We  prove that this quantum toroidal algebra
has a natural triangular decomposition,  a (topological) Hopf algebra structure and a vertex operator realization.
\end{abstract}
\maketitle

\section{Introduction}
Let $\dot\g$  be a finite dimensional simple Lie algebra over $\C$.
The  universal central extension $\mathfrak t(\dot\g)$ of the $2$-loop algebra
$\dot\g\ot \C[t_1^{\pm 1}, t_2^{\pm 1}]$, called the toroidal Lie algebra, has a celebrated presentation given 
by Moody-Rao-Yokonuma \cite{MRY} for constructing
the vertex representation for $\mathfrak t(\dot\g)$. 
In understanding the Langlands reciprocity for algebraic surfaces,
Ginzburg-Kapranov-Vasserot \cite{GKV} introduced a notion of quantum toroidal algebra $\U_\hbar(\dot{\g}_{tor})$ associated to
 $\dot\g$.
The algebra $\U_\hbar(\dot{\g}_{tor})$ specializes to the Moody-Rao-Yokonuma presentation of $\mathfrak t(\dot\g)$ in general,
except for $\dot\g$ in type $A_1$ when $\U_\hbar(\dot{\g}_{tor})$ specializes to a {\it proper} quotient of the latter
\cite{E-PBW-qaff}. 
The theory of quantum toroidal algebras has been extensively studied, especially with a rich representation theory developed by
Hernandez \cite{He-representation-coprod-proof,He-drinfeld-coproduct} and others, see
\cite{He-total} for a survey.
One notices that two major structural properties of $\U_\hbar(\dot{\g}_{tor})$ have played a fundamental role in Hernandez's work:
the triangular decomposition and the (deformed) Drinfeld coproduct.

Let $A$ be the generalized Cartan matrix associated to the affine Lie algebra $\g$ of $\dot\g$.
When $A$ is symmetric, by using the vertex operators calculus, Jing introduced in \cite{J-KM} a quanutm affinization algebra $\U_\hbar(\hat\g)$ associated to $\g$.
Meanwhile, it is remarkable that finite dimensional representations of $\U_\hbar(\hat\g)$ were studied by Nakajima in \cite{Naka-quiver} using powerful geometric approach of quiver varieties.
If $A$ is of simply-laced type, then $\U_\hbar(\hat\g)$ is nothing but the quantum toroidal algebra $\U_\hbar(\dot{\g}_{tor})$.
However, for the case that $A$ is not of simply-laced type, the definition of $\U_\hbar(\hat\g)$
is slightly different from that of $\U_\hbar(\dot{\g}_{tor})$. Explicitly, one notices that $A_1^{(1)}$ is the unique symmetric but non-simply-laced affine generalized Cartan matrix.
In this case, the defining currents $x_0^\pm(z), x_1^\pm(z)$  in $\U_\hbar(\hat\g)$ satisfy the relation
\begin{align}\label{intro-re1}
(z-q^{\mp 2}w)(z-w) x_0^\pm(z)x_1^\pm(w)=(q^{\mp 2}z-w)(z-w) x_1^\pm(w)x_0^\pm(z),
\end{align}
which appeared naturally in calculations of  quantum vertex operators \cite{J-KM} and equivariant K-homology of quiver varieties \cite{Naka-quiver}.
In particular,   $\U_\hbar(\hat\g)$ specializes to the toroidal Lie algebra $\mathfrak t(\dot\g)$  of type $A_1$ as
the classical limit of \eqref{intro-re1} holds in $\mathfrak t(\dot\g)$.
On the other hand, in $\U_\hbar(\dot{\g}_{tor})$ these two currents satisfy the relation
\begin{align}\label{intro-re2}
(z-q^{\mp 2}w) x_0^\pm(z)x_1^\pm(w)=(q^{\mp 2}z-w) x_1^\pm(w)x_0^\pm(z).
\end{align}
This stronger relation was needed in verifying the compatibility with affine quantum Serre relations in $\U_\hbar(\dot{\g}_{tor})$
so that it processes a canonical triangular decomposition \cite{He-representation-coprod-proof}.
For the algebra $\U_\hbar(\hat\g)$, we only know that it has a weak form of triangular decomposition \cite{Naka-quiver}.

From now on, we assume that $\dot\g$ is of type $A_1$.
The main goal of this paper is to define a ``middle" quantum algebra
\[\U_\hbar(\hat\g)\twoheadrightarrow\U\twoheadrightarrow \U_\hbar(\dot\g_{tor})\] of $\U_\hbar(\hat\g)$  and $\U_\hbar(\dot\g_{tor})$,
and prove that this new quantum toroidal algebra $\U$ processes the ``good'' properties
enjoyed by
both of  $\U_\hbar(\hat\g)$  and $\U_\hbar(\dot\g_{tor})$.
Explicitly, we first introduce in Section 2 a quantum algebra $\U$ which specializes to the toroidal Lie algebra $\mathfrak t(\dot\g)$.
By definition, $\U$ is the quotient algebra of  $\U_\hbar(\hat\g)$ obtained by modulo the relation
\begin{align}\label{intro-re3}
[x_0^\pm(z_1),(z_2-q^{\mp 2}w) x_0^\pm(z_2)x_1^\pm(w)-(q^{\mp 2}z_2-w) x_1^\pm(w)x_0^\pm(z_2)]=0.
\end{align}
One notices that $\U_\hbar(\dot\g_{tor})$ is a quotient algebra of $\U$ as the relation \eqref{intro-re2} implies the relations \eqref{intro-re1} and \eqref{intro-re3}.
In Section 3, we prove that $\U$ admits a triangular decomposition (see Theorem \ref{thm:tri-decomp}).
In Section 4, we prove that $\U$ has  a deformed Drinfeld coproduct (see Theorem \ref{thm:bialg-struct}).
As in \cite{Gr-qshuff-qaff}, this allows us to define a (topological) Hopf algebra structure on $\U$ (see Theorem \ref{thm:hopf}).
As usual, the crucial step in establishing Theorems \ref{thm:tri-decomp} and \ref{thm:bialg-struct} is to check the compatibility with affine quantum Serre relations,
in where the new relation \eqref{intro-re3} appeared naturally (see \eqref{newrela1} and \eqref{newrela2}).
Finally, in Section 5 we point out that the quantum vertex operators constructed in \cite{J-KM} satisfy the relation \eqref{intro-re3}, and so 
 we obtain a  vertex representation for $\U$.

Throughout this paper, we denote by $\C[[\hbar]]$ the ring of complex formal series in one variable $\hbar$.
By a $\C[[\hbar]]$-algebra, we mean a  topological algebra over $\C[[\hbar]]$ with respect to the $\hbar$-adic topology.
For $n,k,s\in\Z$ with $0\le k\le s$, we denote the usual quantum numbers as follows
\begin{align*}
  [n]_q=\frac{q^n-q^{-n}}{q-q\inv},\quad[s]_q!=[s]_q[s-1]_q\cdots[1]_q,\quad
  \binom{s}{k}_q=\frac{[s]_q!}{[k]_q![s-k]_q!},
  \end{align*}
  where
\[q=\exp(\hbar)\in\C[[\hbar]].\]

\section{Quantum toroidal algebra of type $A_1$.}
In this section we introduce a new quantum algebra which specializes to toroidal Lie algebra of type $A_1$.

Let
\[A=(a_{ij})_{i,j=0,1}=\left(
                         \begin{array}{cc}
                           2 & -2 \\
                          -2 & 2 \\
                         \end{array}
                       \right)\]
 be the generalized Cartan matrix  of type $A_1^{(1)}$.
For $i,j=0,1$, let 
\begin{align}
  &g_{ij}(z)=\frac{q^{a_{ij}}-z}{1-q^{a_{ij}}z}
\end{align}
be the formal Taylor series at $z=0$. 
The following is  the main object of this paper:

\begin{de}\label{de:q-aff}
The quantum toroidal algebra $\U$  is the $\C[[\hbar]]$-algebra topologically generated by the elements
\begin{align}\label{eq:gen-set}
  h_{i,n},\ x_{i,n}^\pm,\ c\quad i=0,1,\ n\in\Z,
\end{align}
and subject to the relations in terms of generating functions in $z$:
\begin{align*}
  \phi_i^\pm(z)=q^{\pm h_{i,0}}\exp\(\pm(q-q\inv)\sum_{\pm n>0}h_{i,n}z^{-n}\),\quad
  x_i^\pm(z)=\sum_{n\in\Z}x_{i,n}^\pm z^{-n}.
\end{align*}
The relations are:
\begin{align*}
  &\te{(Q1)}\quad c\ \te{is central},\quad [\phi_i^\pm(z),\phi_j^\pm(w)]=0,\\
  &\te{(Q2)}\quad \phi_i^+(z)\phi_j^-(w)=\phi_j^-(w)\phi_i^+(z)g_{ij}(q^cw/z)\inv g_{ij}(q^{-c}w/z),\\
 &\te{(Q3)}\quad \phi_i^+(z)x_j^\pm(w)=x_j^\pm(w)\phi_i^+(z)g_{ij}(q^{\mp\half c}w/z)^{\pm 1},\\
  &\te{(Q4)}\quad \phi_i^-(z)x_j^\pm(w)=x_j^\pm(w)\phi_i^-(z)g_{ji}(q^{\mp\half c}z/w)^{\mp 1},\\
  &\te{(Q5)}\quad [x_i^+(z),x_j^-(w)]=\frac{\delta_{ij}}{q-q\inv}
  \(\phi_i^+(zq^{-\half c})\delta\(\frac{q^cw}{z}\)
  -\phi_i^-(zq^{\half c})\delta\(\frac{q^{-c}w}{z}\)\),\\
  &\te{(Q6)}\quad (z-q^{\pm 2}w)x_i^\pm(z)x_i^\pm(w)=
  (q^{\pm 2}z-w)x_i^\pm(w)x_i^\pm(z),\\
  &\te{(Q7)}\quad (z-q^{\mp 2}w)(z-w)x_i^\pm(z)x_j^\pm(w)
  =(q^{\mp 2}z-w)(z-w)x_j^\pm(w)x_i^\pm(z),\\
  &\te{(Q8)}\quad [x_i^\pm(z_1),\((z_2-q^{\mp 2}w)x_i^\pm(z_2)x_j^\pm(w)
    -(q^{\mp 2}z-w)x_j^\pm(w)x_i^\pm(z_2)\)]=0,\\
  &\te{(Q9)}\, \sum_{\sigma\in S_3}\sum_{r=0}^3(-1)^r\binom{3}{r}_q
   x_i^\pm(z_{\sigma(1)})\cdots x_i^\pm(z_{\sigma(r)})x_j^\pm(w)
   x_i^\pm(z_{\sigma(r+1)})\cdots x_i^\pm(z_{\sigma(3)})=0,
\end{align*}
where $i,j=0,1$ with $i\ne j$ in (Q7), (Q8), (Q9) and $\delta(z)=\sum_{n\in\Z}z^n$ is the usual $\delta$-function.
\end{de}

\begin{rem}{\em As indicated in Introduction,
in literature there are two other definitions of quantum toroidal algebra of type $A_1$:
the algebra $\U_\hbar(\hat\g)$ introduced in \cite{J-KM,Naka-quiver} and the algebra $\U_\hbar(\dot\g_{tor})$ introduced in \cite{GKV,He-representation-coprod-proof}.
By definition, the algebra $\U_\hbar(\hat\g)$ is
the $\C[[\hbar]]$-algebra topologically generated by the elements as in \eqref{eq:gen-set} with relations (Q1)-(Q7) and (Q9),
while the algebra $\U_\hbar(\dot\g_{tor})$ is
the $\C[[\hbar]]$-algebra topologically generated by the elements as in \eqref{eq:gen-set} with relations (Q1)-(Q6), (Q9) and the following relation
\begin{align}\label{strongerrela}
(z-q^{\mp 2}w)x_i^\pm(z)x_j^\pm(w)
  =(q^{\mp 2}z-w)x_j^\pm(w)x_i^\pm(z),\quad i\ne j\in \{0,1\}.
\end{align}
By definition we have that  $\U$  is a  quotient algebra of $\U_\hbar(\hat\g)$, while $\U_\hbar(\dot\g_{tor})$ is a quotient algebra of $\U$.
}\end{rem}

Now we recall the definition of the toroidal Lie algebra of type $A_1$.
Let $\mathcal K$ be the $\C$-vector space spanned by the symbols
\begin{align*}
  &t_1^{m_1}t_2^{m_2}\rk_i,\quad i=1,2,\ m_1,m_2\in\Z
\end{align*}
subject to the relations
\begin{align*}
  &m_1t_1^{m_1}t_2^{m_2}\rk_1+m_2t_1^{m_1}t_2^{m_2}\rk_2=0.
\end{align*}

Let $\dot\g=\ssl_2(\C)$ be the simple Lie algebra of type $A_1$ and $\<\cdot,\cdot\>$ the Killing form on $\dot\g$.
The toroidal Lie algebra (see \cite{MRY})
\begin{align*}
\mathfrak t=\mathfrak t(\dot\g)=\(\dot\g\ot\C[t_1^{\pm 1},t_2^{\pm 1}]\)\oplus \mathcal K
\end{align*}
is the universal central extension of the double loop algebra $\dot\g\ot\C[t_1^{\pm 1},t_2^{\pm 1}]$, where $\mathcal K$ is the center space and
\begin{align*}
  [x\ot t_1^{m_1}t_2^{m_2},y\ot t_1^{n_1}t_2^{n_2}]
  =[x,y]\ot t_1^{m_1+n_1}t_2^{m_2+n_2}+\<x,y\>(\sum_{i=1}^2 m_it_1^{m_1+n_1}t_2^{m_2+n_2}\rk_i),
\end{align*}
for $x,y\in\dot\g$ and $m_1,m_2,n_1,n_2\in\Z$.

Let $\{e^+,\al,e^-\}$ be a standard $\ssl_2$-triple in $\dot\g$, that is,
\begin{align*}
  &[e^+,e^-]=\al,\quad [\al,e^\pm]=\pm 2e^\pm.
\end{align*}
For $i=0,1$ and $m\in \Z$, set
\begin{align*}
  \al_{1,m}=\al\ot t_2^m,\ \al_{0,m}=t_2^mk_1-\al\ot t_2^m,\
  e_{1,m}^\pm=e^\pm\ot t_2^m,\ e_{0,m}^\pm=e^\mp\ot t_1^{\pm 1}t_2^m.
\end{align*}
Note that these elements generate the algebra $\mathfrak t$.

Following \cite{MRY}, we have:
\begin{prop}\label{MRY-pre} The toroidal Lie algebra $\mathfrak t$ is abstractly generated by
the elements $\al_{i,m}, e_{i,m}^\pm, \rk_1$ for $i=0,1, m\in \Z$ with relations
\begin{align*}
  &\te{(L1)}\quad [\rk_1,\mathfrak t]=0,\ [\al_{i,m},\al_{j,n}]=a_{ij}\delta_{m+n,0}m \rk_1,\\
  &\te{(L2)}\quad [\al_{i,m},e_{j,n}^\pm]=\pm a_{ij} e_{j,m+n},\\
  &\te{(L3)}\quad [e_{i,m}^+,e_{j,n}^-]=\delta_{ij}\(\al_{j,m+n}+m\delta_{m+n,0}\rk_1\),\\
  &\te{(L4)}\quad (z-w)[e_i^\pm(z),e_i^\pm(w)]=0,\\
  &\te{(L5)}\quad
  (z-w)^2[e_i^\pm(z),e_j^\pm(w)]=0,\quad i\ne j,\\
  &\te{(L6)}\quad (z_2-w)[e_i^\pm(z_1),[e_i^\pm(z_2),e_j^\pm(w)]]=0,\quad i\ne j,\\
  &\te{(L7)}\quad [e_i^\pm(z_1),[e_i^\pm(z_2),[e_i^\pm(z_3),e_j^\pm(w)]]]=0,\quad i\ne j,
\end{align*}
where $i,j=0,1$, $m,n\in \Z$ and $e_i^\pm(z)=\sum_{n\in\Z}e_{i,n}^\pm z^{-n}$.
\end{prop}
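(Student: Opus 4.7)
The plan is to prove an isomorphism $\phi:\mathfrak t'\to\mathfrak t$, where $\mathfrak t'$ denotes the Lie algebra abstractly defined by the generators $\alpha_{i,m},e_{i,m}^\pm,\rk_1$ and relations (L1)--(L7). I would begin by verifying that the corresponding explicit elements of $\mathfrak t$ satisfy all of (L1)--(L7), yielding a well-defined homomorphism. Relations (L1)--(L3) follow by direct computation with the bracket formula in $\mathfrak t$ and the values $\<\alpha,\alpha\>=2$, $\<e^+,e^-\>=1$ of the Killing form, provided one repeatedly rewrites cocycle contributions involving $\rk_2$ using the defining relation $m_1 t_1^{m_1}t_2^{m_2}\rk_1+m_2 t_1^{m_1}t_2^{m_2}\rk_2=0$ of $\mathcal K$. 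The current-form relations (L4)--(L7) encode identities among root vectors $e^\pm\otimes t_1^a t_2^b$: (L4) from $[e^\pm,e^\pm]=0$; (L5) and (L6) from the fact that $e_0^\pm$ and $e_1^\pm$ lie in distinct but adjacent root spaces of the affine root system (so that $(z-w)^2$ or the iterated bracket kills the leading singularity); and the cubic Serre relation (L7) from the nilpotency of $\mathrm{ad}\,e_i^\pm$ in the relevant weight spaces.

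Surjectivity of $\phi$ is essentially immediate: iterated brackets among $e_{0,m}^\pm=e^\mp\otimes t_1^{\pm 1}t_2^m$ and $e_{1,n}^\pm=e^\pm\otimes t_2^n$ generate all $x\otimes t_1^a t_2^b$ with $x\in\dot\g$, and the accompanying cocycle contributions, together with $\rk_1$ and the defining relation of $\mathcal K$, account for the entire center.

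The main obstacle is injectivity. My strategy is to equip $\mathfrak t'$ with the $\Z$-grading by $t_1$-degree, declaring $\deg e_{0,m}^\pm=\pm 1$ and all other generators to have degree $0$; relations (L1)--(L7) are homogeneous in this grading. The degree-zero subalgebra is generated by $\alpha_{1,m},e_{1,m}^\pm,\rk_1$, and by Drinfeld's loop-algebra presentation of the affine Lie algebra $\widehat{\dot\g}$ it is identified with the degree-zero piece of $\mathfrak t$. For the higher-graded pieces, (L4) and (L5) reduce iterated products $e_{0,m_1}^\pm\cdots e_{0,m_k}^\pm$ to a PBW-like normal form, while the Serre relations (L6) and (L7) truncate the resulting spanning set to match the weight-space dimensions of $\mathfrak t$. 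The delicate point---which is precisely the Moody--Rao--Yokonuma reduction---is to verify that the bilinear and cubic relations (L6)--(L7), stated only at the current-variable level, are sufficient at every $t_1$-grade to enforce all Serre identities throughout $\mathfrak t'$; this is achieved by exploiting the Heisenberg commutation relations (L2) to propagate the Serre identities across the grading, after which dimension counting on each homogeneous component yields injectivity.
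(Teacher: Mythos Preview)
Your outline is essentially correct, but it takes a more laborious route than the paper. You propose to establish injectivity of $\phi:\mathfrak t'\to\mathfrak t$ by a direct graded analysis---PBW normal forms, Serre truncation, and dimension counting in each $t_1$-degree---which, as you acknowledge, amounts to reproving the Moody--Rao--Yokonuma presentation from scratch.

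The paper instead uses a short squeeze argument. It introduces a \emph{larger} algebra $\mathcal L'$ defined by only the relations (L1)--(L4) and (L7); this is exactly the MRY presentation (noting that (L4) is equivalent to $[e_i^\pm(z),e_i^\pm(w)]=0$), and \cite{MRY} already shows $\mathcal L'\cong\mathfrak t$. Since $\mathcal L'$ surjects onto your $\mathfrak t'$ (one is simply imposing the additional relations (L5), (L6)), one obtains
\[
\mathcal L' \twoheadrightarrow \mathfrak t' \twoheadrightarrow \mathfrak t,
\]
and the composite is an isomorphism by MRY. Hence both arrows are isomorphisms, and in particular (L5), (L6) are automatic consequences of (L1)--(L4), (L7).

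The upshot: your approach works but duplicates the content of \cite{MRY}; the paper's approach isolates precisely what is new (namely, that the extra relations (L5), (L6) are redundant) and dispatches it in one line by citing MRY for the hard part. If you want to match the paper's efficiency, replace your graded injectivity argument with this sandwich.
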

\begin{proof} Denote by $\mathcal L$ the Lie algebra abstractly generated by
the elements $\al_{i,m}, e_{i,m}^\pm, \rk_1$ for $i=0,1, m\in \Z$ with relations
(L1)-(L7). One easily checks that the relations (L1)-(L7) hold in $\mathfrak t$ and so we have a
surjective Lie homomorphism $\psi$ from $\mathcal L$ to $\mathfrak t$.  On the other hand,
denote by $\mathcal L'$ the Lie algebra abstractly generated by
the elements $\al_{i,m}, e_{i,m}^\pm, \rk_1$ for $i=0,1, m\in \Z$ with relations
(L1)-(L4) and (L7). Then there is a quotient map, say $\varphi$, from $\mathcal L'$ to $\mathcal L$.
 It was proved in \cite{MRY}  that
the surjective homomorphism $\psi\circ \varphi:\mathcal L'\rightarrow \mathcal L\rightarrow \mathfrak t$ is an isomorphism,
noting that the relation (L4) is equivalent to the relation $[e_i^\pm(z),e_i^\pm(w)]=0$.
This in turn implies that the map $\psi$ is an isomorphism, as required.
\end{proof}

By combing Definition \ref{de:q-aff} with Proposition \ref{MRY-pre}, one immediate gets the following result.

\begin{thm}
The classical limit $\U/\hbar\U$ of $\U$ is isomorphic to the universal enveloping algebra $\U(\mathfrak t)$ of the torodial Lie algebra $\mathfrak t$.
\end{thm}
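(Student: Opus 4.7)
The plan is to build an algebra isomorphism $\Psi: \U(\mathfrak t) \to \U/\hbar\U$. By Proposition \ref{MRY-pre}, the enveloping algebra $\U(\mathfrak t)$ is presented (as an associative algebra) by the generators $\rk_1$, $\al_{i,m}$, $e_{i,m}^\pm$ ($i\in\{0,1\}$, $m\in\Z$) subject to the relations (L1)-(L7). Define $\Psi$ on generators by $\rk_1 \mapsto \bar c$, $\al_{i,m} \mapsto \bar h_{i,m}$, and $e_{i,m}^\pm \mapsto \bar x_{i,m}^\pm$, where $\bar y$ denotes the image of $y\in\U$ modulo $\hbar$. Surjectivity of $\Psi$ is automatic because these classes topologically generate $\U/\hbar\U$; the content of the proof is therefore (a) verifying that (L1)-(L7) hold in $\U/\hbar\U$, so that $\Psi$ is well-defined, and (b) proving $\Psi$ is injective.

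For (a), most relations drop out of a naive $q=1$ specialization of the quantum relations: (Q6) collapses to $(z-w)[\bar x_i^\pm(z),\bar x_i^\pm(w)]=0$, which is (L4); (Q7) yields (L5); (Q8) yields (L6); and (Q9) yields the iterated-bracket form of (L7), using $\binom{3}{r}_q|_{q=1} = \binom{3}{r}$. The remaining relations (L1)-(L3) require refined $\hbar$-expansions, since the corresponding quantum identities contain factors $g_{ij}$, $q^{\pm c/2}$, and $(q-q\inv)\inv$ that become indeterminate at $q=1$. Writing $\phi_i^\pm(z) = 1 \pm \hbar H_i^\pm(z) + O(\hbar^2)$ with $H_i^+(z) = h_{i,0} + 2\sum_{n>0}h_{i,n}z^{-n}$ (and analogously for $H_i^-$), one obtains $g_{ij}(u) = 1 + \hbar\, a_{ij}\frac{1+u}{1-u} + O(\hbar^2)$ and $\delta(q^{\pm c}w/z) = \delta(w/z) \pm \hbar c\, D\delta(w/z) + O(\hbar^2)$. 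The order-$\hbar$ expansion of (Q3)-(Q4) then produces (L2); clearing $(q-q\inv)$ in (Q5) and extracting the $\hbar$-coefficient gives (L3), with the $\bar c\, D\delta$ term coming precisely from the $\hbar$-shifts in the $\delta$-functions; and the order-$\hbar^2$ expansion of (Q2), using the identity $g_{ij}(q^c u)\inv g_{ij}(q^{-c}u) = 1 - 4\hbar^2 a_{ij}c \sum_{m\ge 1} m u^m + O(\hbar^4)$, yields (L1) upon matching coefficients of $z^{-m}w^{-n}$.

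For (b), I exploit the triangular decomposition. By Theorem \ref{thm:tri-decomp}, $\U \cong \U^- \ctimes \U^0 \ctimes \U^+$ as $\C[[\hbar]]$-modules, whence $\U/\hbar\U \cong (\U^-/\hbar\U^-) \otimes (\U^0/\hbar\U^0) \otimes (\U^+/\hbar\U^+)$. On the Lie-algebra side, $\mathfrak t = \mathfrak t^- \oplus \mathfrak t^0 \oplus \mathfrak t^+$ (with $\mathfrak t^\pm$ spanned by the $e_{i,m}^\pm$ and $\mathfrak t^0$ by $\al_{i,m},\rk_1$), and the classical PBW theorem gives $\U(\mathfrak t) \cong \U(\mathfrak t^-) \otimes \U(\mathfrak t^0) \otimes \U(\mathfrak t^+)$. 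Since $\Psi$ respects these decompositions, injectivity reduces to that of each triangular factor: the Cartan part $\Psi^0$ is immediate because $\U^0$ is commutative by (Q1), so $\U^0/\hbar\U^0$ is polynomial in the $\bar h_{i,m},\bar c$ and matches $\U(\mathfrak t^0)$; the nilpotent parts $\Psi^\pm$ follow by comparing the PBW monomials in the $\bar x_{i,m}^\pm$ (available through Theorem \ref{thm:tri-decomp}) with the classical PBW monomials in the $e_{i,m}^\pm$. The principal technical obstacle lies in step (a), in deriving (L1) from (Q2): here one must carry out the $\hbar^2$-expansion cleanly, relying on the identity $4\sinh(ma_{ij}\hbar)\sinh(mc\hbar) = 4m^2 a_{ij} c\hbar^2 + O(\hbar^4)$ and the noncommutative interplay between $\phi_j^-(w)\phi_i^+(z)$ and the ratio of $g$-factors.
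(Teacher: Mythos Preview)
The paper's own proof is a single sentence: the result follows ``by combining Definition~\ref{de:q-aff} with Proposition~\ref{MRY-pre}''. The intended meaning is that the defining relations (Q1)--(Q9) of $\U$, read as relations among the generators $h_{i,n},x_{i,n}^\pm,c$, specialize at $\hbar=0$ to precisely (L1)--(L7), so $\U/\hbar\U$ and $\U(\mathfrak t)$ are quotients of the same free algebra by the same ideal. Your part~(a) is exactly this verification written out in detail, and once it is done both $\Psi$ and its inverse exist by the universal property of presentations; no separate injectivity argument is required.

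Your part~(b) therefore takes a different and unnecessary route, and as written it contains a gap. Theorem~\ref{thm:tri-decomp} supplies only a \emph{presentation} of $\U^\pm$ by generators $x_{i,m}^\pm$ and relations (Q6)--(Q9); it establishes no PBW basis and no linear-independence statement for monomials, so the phrase ``PBW monomials in the $\bar x_{i,m}^\pm$ available through Theorem~\ref{thm:tri-decomp}'' is not justified. You are also invoking a theorem proved in Section~\ref{sec:tri-decomp}, after the present one; this is not circular, but it is out of order with the paper's development. If you want to salvage the triangular strategy, replace the PBW claim by the observation that Theorem~\ref{thm:tri-decomp} makes $\U^\pm/\hbar\U^\pm$ presented by the $\hbar=0$ limits (L4)--(L7), which also present $\U(\mathfrak t^\pm)$ via the classical triangular decomposition of $\mathfrak t$; but this detour still gains nothing over the direct matching of presentations already implicit in part~(a).
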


\begin{rem}{\em From the proof of Proposition \ref{MRY-pre}, one knows that the algebra $\U_\hbar(\hat\g)$ also specializes $\mathfrak t$.
On the other hand, it is straightforward to see that the current
\[(z-w)[e_0^\pm(z),e_1^\pm(w)]\]  is nonzero in $\mathfrak t$  and its components  lie in the space $\bar{\mathcal K}=\sum_{m_1\in \Z}(\C t_1^{m_1}t_2\rk_1+\C t_1^{m_1}t_2^{-1}\rk_1)$.
 Thus, the algebra $\U(\dot{\g}_{tor})$ specializes to the
quotient algebra
$\mathfrak t/\bar{\mathcal K}$ of $\mathfrak t$ (cf. \cite{E-PBW-qaff}).}
\end{rem}

\section{Triangular decomposition of $\U$}\label{sec:tri-decomp}
In this section, we prove a triangular decomposition of $\U$.
By  a triangular decomposition of a $\C[[\hbar]]$-algebra $A$, we mean  a data of three closed $\C[[\hbar]]$-subalgebras $(A^-,H,A^+)$
of $A$ such that the multiplication $x^-\ot h\ot x^+\mapsto x^-hx^+$
induces an $\C[[\hbar]]$-module isomorphism from $A^-\wh\ot H\wh\ot A^+$ to $A$.
Here and henceforth, for two $\C[[\hbar]]$-modules $U,V$, the notion $U\wh\ot V$ stands for the $\hbar$-adically completed tensor product of $U$ and $V$.

Let $\U^+$ (resp. $\U^-$; resp. $\mathcal H$) be the closed subalgebra of $\U$ generated by $x_{i,m}^+$ (resp. $x_{i,m}^-$; resp. $h_{i,m}$, $c$).
The following is the main result of this section:

\begin{thm}\label{thm:tri-decomp}
$(\U^-,\mathcal H,\U^+)$ is a triangular decomposition of $\U$.
Moreover, $\U^+$ (resp. $\U^-$; resp $\mathcal H$) is isomorphic to the $\C[[\hbar]]$-algebra topologically generated by $x_{i,m}^+$ (resp. $x_{i,m}^-$; resp. $h_{i,m}$, $c$), and subject to the relations (Q6-Q9) with ``$+$'' (resp. (Q6-Q9) with ``$-$''; resp. (Q1-Q2)).
\end{thm}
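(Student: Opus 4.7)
The plan is to follow the standard strategy for proving triangular decomposition of quantum affine-type algebras: exhibit a smash-product-like algebra structure on the tensor product and use it to invert the obvious multiplication map. I would first introduce three auxiliary $\C[[\hbar]]$-algebras $\wt\U^+$, $\wt\U^-$, $\wt{\mathcal H}$ defined by the generators $x_{i,m}^+$, $x_{i,m}^-$, and $h_{i,m},c$, respectively, subject only to the one-sided relations (Q6--Q9) with ``$+$'', (Q6--Q9) with ``$-$'', and (Q1--Q2). By the universal property each carries a canonical $\C[[\hbar]]$-algebra homomorphism into $\U$, so multiplication in $\U$ induces a $\C[[\hbar]]$-module map $\mu\colon \wt\U^-\wh\ot \wt{\mathcal H}\wh\ot\wt\U^+\to\U$.

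Surjectivity of $\mu$ is the easy half and follows by using (Q3), (Q4), (Q5) as straightening rules that move each $x^-$ to the leftmost factor, each $x^+$ to the rightmost factor, and consolidate $h$'s and $c$ in the middle; the iterated rewritings converge in the $\hbar$-adic topology. For injectivity, I would equip $V:=\wt\U^-\wh\ot\wt{\mathcal H}\wh\ot\wt\U^+$ with its own $\C[[\hbar]]$-algebra structure dictated by the same straightening rules, then verify that every defining relation of $\U$ holds in $V$; this produces a homomorphism $\U\to V$ inverse to $\mu$ on generators. Constructing the algebra structure on $V$ amounts to showing that $\wt{\mathcal H}$ acts on $\wt\U^\pm$ by continuous algebra automorphisms, with scalar cocycles read off directly from the $g_{ij}$-factors in (Q3) and (Q4), and that (Q5) induces a well-defined ``twisted bimodule'' pairing between $\wt\U^+$ and $\wt\U^-$ satisfying the associativity constraints coming from mixed triples.

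The main obstacle, and precisely the reason the authors introduced (Q8), is verifying the compatibility of these straightening rules with the defining relations of $\wt\U^\pm$ themselves, particularly with the affine quantum Serre relation (Q9) and the cubic relation (Q8). That each $\phi_i^\pm(z)$ preserves the ideal of same-sign relators up to a uniform $g$-function scalar is a routine check using (Q3) and (Q4). The delicate part is that the bracket of $x_j^\mp$ with the Serre relator from (Q9), when expanded through repeated use of (Q5), yields a sum of $\phi^\pm$-multiples of cubic expressions in $x^\pm$ that must themselves lie in the ideal generated by (Q6)--(Q9); a direct expansion shows that in the $A_1^{(1)}$ setting the relation (Q7) alone does not absorb all the resulting terms, and it is precisely the new commutator relation (Q8) that supplies the missing identities --- this is exactly the motivation for (Q8) flagged in the introduction. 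Once this compatibility is established, $V$ is an associative algebra, the inverse homomorphism $\U\to V$ exists, and both the injectivity of $\mu$ and the asserted presentations $\wt\U^\pm\cong\U^\pm$, $\wt{\mathcal H}\cong\mathcal H$ follow at once.
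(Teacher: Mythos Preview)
Your proposal is correct and pinpoints the essential obstacle --- that compatibility of the Serre relator (Q9) with straightening past $x_k^\mp$ requires precisely (Q8) --- but the paper organizes the argument differently. Rather than building an associative product on $V$ directly, the paper introduces a chain $\wt\U\twoheadrightarrow\wh\U\twoheadrightarrow\bar\U\twoheadrightarrow\U$ in which $\wt\U$ carries only (Q1--Q5), and then (Q6--Q7), (Q8), (Q9) are imposed successively. The triangular decomposition of $\wt\U$ (with \emph{free} positive and negative parts) is standard, and each descent step invokes a general criterion from Hernandez: if $(A^-,H,A^+)$ is a triangular decomposition and $B^\pm\subset A^\pm$ are closed two-sided ideals with $AB^+\subset B^+A$ and $B^-A\subset AB^-$, then $A/B$ inherits a triangular decomposition with pieces $A^\pm/B^\pm$. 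This reduces everything to showing that each newly imposed relator commutes with $x_k^\mp(w_0)$ modulo the \emph{previously} imposed relations, so (Q7) is already available when handling (Q8), and (Q8) when handling (Q9). Your direct construction is equivalent in content but must argue associativity of the product on $V$ separately and bundles all compatibility checks together; the paper's stepwise approach avoids that bookkeeping, recycles Hernandez's computations for the (Q6--Q7) stage verbatim, and isolates exactly where (Q8) becomes indispensable.
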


The rest of this section is devoted to a proof of Theorem \ref{thm:tri-decomp}.
We first introduce some algebras related to $\U$ that will be used later on.

\begin{de}\label{de:algebras}
Let $\wt\U$ be the $\C[[\hbar]]$-algebra topologically generated by the elements in \eqref{eq:gen-set} with defining relations (Q1-Q5),
$\wh\U$ the quotient algebra of $\wt\U$ modulo the relations (Q6-Q7),
and $\bar\U$ the quotient algebra of $\wh\U$ modulo the relation (Q8).
\end{de}

Denote by $\wt \U^+$ (resp. $\wt \U^-$; resp. $\wt{\mathcal H}$) the closed subalgebra of $\wt\U$ generated by $x_{i,m}^+$ (resp. $x_{i,m}^-$; resp. $h_{i,m}$, $c$).
The following result is standard.

\begin{lem}\label{lem:tri-tech0}
$(\wt\U^-, \wt{\mathcal H},\wt\U^+)$ is a triangular decomposition of $\wt\U$. Moreover, $\wt\U^+$ (resp. $\wt\U^-$)
isomorphic to the $\C[[\hbar]]$-algebra topologically free
generated by $x_{i,m}^+$ (resp. $x_{i,m}^-$) and $\wt{\mathcal H}$ is isomorphic to the $\C[[\hbar]]$-algebra topologically generated by  $h_{i,m}$, $c$ with relations (Q1-Q2).
\end{lem}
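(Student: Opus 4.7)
\begin{bfproof}[Proof plan for Lemma \ref{lem:tri-tech0}]
The plan is the standard ``free action on a tensor product'' argument. Let $\F^+$ (resp.\ $\F^-$) denote the $\C[[\hbar]]$-algebra topologically freely generated by the symbols $x_{i,m}^+$ (resp.\ $x_{i,m}^-$), and let $\CH_0$ denote the $\C[[\hbar]]$-algebra topologically generated by the $h_{i,m}$ and $c$ subject to (Q1)--(Q2). By the universal property, we get canonical algebra maps $\F^\pm\to\wt\U$ and $\CH_0\to\wt\U$, and hence a $\C[[\hbar]]$-linear multiplication map
\[
\mu:\F^-\wh\ot\CH_0\wh\ot\F^+\longrightarrow\wt\U.
\]
The first step is to observe that $\mu$ is surjective: the mixed relations (Q3)--(Q5) let one push every $\phi_i^{\pm}(z)$ past each $x_j^{\pm}(w)$ and every $x_i^+$ past each $x_j^-$, modulo additional terms that lie in the image of $\mu$ and are of strictly lower ``crossing complexity.'' A standard induction on the number of wrong-ordered pairs of generators in a given word then shows the image of $\mu$ contains all of $\wt\U$.

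Next I would prove injectivity of $\mu$ by constructing an action of $\wt\U$ on $B:=\F^-\wh\ot\CH_0\wh\ot\F^+$ under which the vacuum $1\ot1\ot1$ is cyclic with trivial annihilator. Extract from (Q3)--(Q4) explicit Fourier-coefficient formulas expressing $h_{j,n}x_{i,m}^\pm$ as $x_{i,m}^\pm h_{j,n}$ plus scalar multiples of lower-degree $x_{i,\bullet}^\pm$; extract from (Q5) a formula expressing $x_{j,n}^+x_{i,m}^-$ as $x_{i,m}^-x_{j,n}^+$ plus an element of $\CH_0$. Using these formulas, define:
\begin{itemize}
\item $x_{i,m}^-$ acts by left multiplication on the first tensor factor;
\item $c$ and $h_{j,n}$ act on $f^-\ot h\ot f^+$ by first commuting past $f^-$ via the (Q4)-derived formulas (which is a finite process modulo any fixed power of $\hbar$) and then multiplying into $\CH_0$;
\item $x_{j,n}^+$ acts on $f^-\ot h\ot f^+$ by first commuting past $f^-$ using the (Q5)-derived formula, absorbing the resulting $\CH_0$-elements into the middle factor using (Q3), and finally multiplying into $\F^+$.
\end{itemize}
These rules define, for each generator, a continuous $\C[[\hbar]]$-linear endomorphism of $B$ because the commutations only produce finitely many new terms in each degree.

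Finally, I would verify that the action satisfies (Q1)--(Q5). Relations (Q1)--(Q2) hold because $\CH_0$ acts on its own factor through its own multiplication. Relations (Q3)--(Q4) hold essentially tautologically, since they are the defining formulas used to move $h_{j,n}$ past the $x_{i,m}^\pm$. The main technical step --- and the step where I expect most of the labor --- is checking (Q5): one must show that $[x_i^+(z),x_j^-(w)]$ acting on an arbitrary element of $B$ collapses to the prescribed $\phi^\pm$-expression. This comes down to a bookkeeping computation propagating the (Q5)-commutator through the (Q3)- and (Q4)-commutators, together with the fact that when $x_{j,n}^+$ is commuted past an $x_{i,m}^-$ the produced $\CH_0$-term is precisely the one dictated by (Q5). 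Given the action, the map $\psi:\wt\U\to B$, $u\mapsto u\cdot(1\ot1\ot1)$, satisfies $\psi\circ\mu=\mathrm{id}_B$ by construction, so $\mu$ is injective; this also shows the natural maps $\F^\pm\to\wt\U^\pm$ and $\CH_0\to\wt{\mathcal H}$ are isomorphisms, since $\mu$ restricted to each tensor slot is the canonical map onto the corresponding subalgebra.
\end{bfproof}
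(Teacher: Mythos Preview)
The paper itself gives no proof of this lemma: it simply records that ``the following result is standard.'' Your plan is exactly the standard PBW-type argument the paper is alluding to (build an action of $\wt\U$ on the model space $\F^-\wh\ot\CH_0\wh\ot\F^+$ so that acting on the vacuum inverts the multiplication map), so the approach is correct and there is nothing to compare.

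Two small points of precision worth tightening when you write it out. First, commuting the Cartan generators past $x_{j,m}^-$ uses \emph{both} (Q3) and (Q4), not just (Q4): the relations (Q3) and (Q4) encode respectively the commutation of $\phi_i^+$ and $\phi_i^-$ with $x_j^\pm$, and you need both to handle all Fourier modes $h_{i,n}$, $n\gtrless 0$. Likewise, when $x_{j,n}^+$ is pushed past an $x_{i,m}^-$ and produces a $\CH_0$-term sitting to the left of remaining $x^-$'s, absorbing that term into the middle factor again requires (Q3)--(Q4), not just (Q3). Second, in the $\hbar$-adic setting the ``finitely many new terms in each degree'' claim should be phrased as: the commutation formulas extracted from (Q3)--(Q5) are identities in $\C[[\hbar]]$ whose $\hbar^0$-part is the classical (Lie-algebra) commutation, so modulo any fixed $\hbar^k$ each rewriting step involves only finitely many monomials; this is what guarantees continuity of the operators you define on the $\hbar$-adically completed tensor product.
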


The following result was proved in (the proof of) \cite[Lemma 8]{He-representation-coprod-proof}.

\begin{lem}\label{lem:tri-tech1}
For $i,j,k=0,1$, the following  hold in $\wt\U$:
\begin{align}\label{eq:tri-tech1}
  [(z-q^{\pm a_{ij}}w)x_i^\pm(z)x_j^\pm(w)-(q^{\pm a_{ij}}z-w)x_j^\pm(w)x_i^\pm(z),x_k^\mp(w_0)]=0.
\end{align}
\end{lem}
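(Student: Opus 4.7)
The plan is to verify the identity by a direct computation using only the Drinfeld relations (Q3)--(Q5), which are all that is available in $\wt\U$. I treat the ``$+$'' case; the ``$-$'' case is entirely parallel, obtained by swapping $\phi^+\leftrightarrow\phi^-$ and $q^c\leftrightarrow q^{-c}$ in the $\delta$-shifts.

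Write $P(z,w) := (z - q^{a_{ij}} w)\, x_i^+(z) x_j^+(w) - (q^{a_{ij}} z - w)\, x_j^+(w) x_i^+(z)$, and set $B := [x_i^+(z), x_k^-(w_0)]$, $C := [x_j^+(w), x_k^-(w_0)]$. The Leibniz rule gives
\begin{align*}
 [P(z,w), x_k^-(w_0)] &= (z - q^{a_{ij}} w)\bigl( x_i^+(z)\, C + B\, x_j^+(w) \bigr) \\
 &\quad - (q^{a_{ij}} z - w)\bigl( x_j^+(w)\, B + C\, x_i^+(z) \bigr).
\end{align*}
By (Q5), $B$ is proportional to $\delta_{ik}$ and $C$ to $\delta_{jk}$; each is a sum of terms of the shape $\phi_\bullet^{\pm}(\text{shifted})\,\delta(\text{scalar}\,w_0/\text{var})$, in which the $\delta$-factor of $B$ (resp.\ $C$) involves only the formal variables $z, w_0$ (resp.\ $w, w_0$) and therefore commutes with $x_j^+(w)$ (resp.\ $x_i^+(z)$). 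Thus the claim reduces to the two ``rational commutation'' identities
\begin{align*}
 (z - q^{a_{ij}} w)\, x_i^+(z)\, \phi_j^{\pm}(w q^{\mp\half c}) &= (q^{a_{ij}} z - w)\, \phi_j^{\pm}(w q^{\mp\half c})\, x_i^+(z), \\
 (z - q^{a_{ij}} w)\, \phi_i^{\pm}(z q^{\mp\half c})\, x_j^+(w) &= (q^{a_{ij}} z - w)\, x_j^+(w)\, \phi_i^{\pm}(z q^{\mp\half c}),
\end{align*}
holding for both signs; in the diagonal case $i=j=k$ both $B$ and $C$ are simultaneously nonzero, but the two cancellations are independent.

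Both displayed identities follow from (Q3)--(Q4) once one computes $g_{ij}(w/z) = (q^{a_{ij}} z - w)/(z - q^{a_{ij}} w)$ and $g_{ji}(z/w) = (z - q^{a_{ij}} w)/(q^{a_{ij}} z - w)$, using crucially the symmetry $a_{ij} = a_{ji}$ of the $A_1^{(1)}$ Cartan matrix: multiplying the $g$-factor against the linear polynomial on one side of each commutation rule converts it exactly into the linear polynomial on the other side with the two currents swapped. The point worth emphasizing is that no Serre-type relations --- none of (Q6)--(Q9) --- are invoked, so the identity already holds in $\wt\U$; the only genuine obstacle is careful bookkeeping of the $q^{\mp\half c}$-shifts and of the arguments of the $\delta$-functions, which is a purely mechanical matter.
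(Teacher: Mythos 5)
Your argument is correct and is essentially the computation that the paper outsources to its citation of Hernandez's Lemma 8: expand the commutator by Leibniz, substitute (Q5) for the mixed commutators, and observe that each resulting $\phi$-factor obeys the same two-term exchange relation as the currents, by (Q3)--(Q4), the identity $(z-q^{a_{ij}}w)g_{ij}(w/z)=q^{a_{ij}}z-w$, and the symmetry $a_{ij}=a_{ji}$. One small point you elide: in the ``$-$'' case the $\phi$'s produced by (Q5) carry the argument $w_0$ rather than $z$ or $w$, so before commuting one must invoke the substitution property $f(w_0)\,\delta(q^{\pm c}z/w_0)=f(q^{\pm c}z)\,\delta(q^{\pm c}z/w_0)$ to turn the exchange factor into a function of $z/w$ --- routine, but it is the one place where the two signs are not literally mirror images.
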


Similarly, we have:

\begin{lem}\label{lem:tri-tech2}
For $i,j,k=0,1$ with $i\ne j$, the following  hold in $\wh\U$:
\begin{align}\label{eq:tri-tech2-temp1}
  &[[x_i^\pm(z_1),(z_2-q^{\mp 2}w)x_i^\pm(z_2)x_j^\pm(w)
    -(q^{\mp 2}z_2-w)x_j^\pm(w)x_i^\pm(z_2)],x_k^\mp(w_0)]=0.
\end{align}
\end{lem}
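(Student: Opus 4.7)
Write $F^\pm(z_2,w) := (z_2-q^{\mp 2}w)x_i^\pm(z_2)x_j^\pm(w) - (q^{\mp 2}z_2-w)x_j^\pm(w)x_i^\pm(z_2)$. The plan is to apply the Jacobi identity
\[
[[x_i^\pm(z_1), F^\pm(z_2,w)], x_k^\mp(w_0)] = [x_i^\pm(z_1), [F^\pm(z_2,w), x_k^\mp(w_0)]] + [[x_i^\pm(z_1), x_k^\mp(w_0)], F^\pm(z_2,w)].
\]
Since $a_{ij}=-2$ for $i\ne j$, the expression $F^\pm(z_2,w)$ is exactly of the form appearing in Lemma \ref{lem:tri-tech1}, so the first term on the right already vanishes in $\wt\U$. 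It therefore suffices to prove that the second term vanishes in $\wh\U$.

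By (Q5), the inner commutator $[x_i^\pm(z_1),x_k^\mp(w_0)]$ is zero when $k\ne i$, while for $k=i$ it is a linear combination of expressions of the form $\phi_i^\epsilon(z_1 q^{\alpha c})\delta(q^{\beta c}w_0/z_1)$ with $\epsilon\in\{+,-\}$. Because $c$ is central, the $\delta$-factors are central formal series in $w_0, z_1$ and commute with $F^\pm(z_2,w)$. The problem therefore reduces to showing
\[
[\phi_i^\epsilon(z'),\, F^\pm(z_2,w)]=0 \quad \text{in } \wh\U
\]
for each sign $\epsilon$ and each indeterminate $z'$.

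To establish this, I would commute $\phi_i^\epsilon(z')$ through each summand of $F^\pm$ using (Q3)--(Q4). Since the $g_{rs}$-factors are central formal series, they can be collected on the right to yield
\[
\phi_i^\epsilon(z')F^\pm(z_2,w) = F^\pm(z_2,w)\cdot h(z_2,w,z')\cdot \phi_i^\epsilon(z'),
\]
where $h = g_{ii}(\alpha)^{\mu}g_{ij}(\beta)^{\mu}$ for some $\mu\in\{\pm 1\}$ and monomials $\alpha,\beta$ (in $z_2, w, z', q^{\pm c/2}$) that coincide under the substitution $z_2=w$. The key computation, using $a_{ii}=2=-a_{ij}$ in an essential way, is the identity
\[
g_{ii}(u)g_{ij}(v)-1 = \frac{(q^2-q^{-2})(u-v)}{(1-q^2u)(1-q^{-2}v)};
\]
in particular $g_{ii}(u)g_{ij}(u)=1$. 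Consequently $h-1$ factors as $(z_2-w)$ times an invertible formal series in $z_2/z', w/z'$; combining this with the relation $(z_2-w)F^\pm(z_2,w)=0$, which is an immediate consequence of (Q7), gives $F^\pm(z_2,w)(h-1)=0$, hence $[\phi_i^\epsilon(z'),F^\pm(z_2,w)]=0$, as required.

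The main obstacle is the last step: showing that the commutation factor $h-1$ arising from (Q3)--(Q4) carries an explicit $(z_2-w)$-factor. This rests on the identity $g_{ii}(u)g_{ij}(u)=1$, a special feature of the $A_1^{(1)}$ Cartan data ($a_{ii}+a_{ij}=0$ for $i\ne j$) that is crucial for allowing the weaker relation (Q7), rather than \eqref{strongerrela}, to suffice.
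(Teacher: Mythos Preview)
Your proposal is correct and follows essentially the same route as the paper's proof: both reduce via the Jacobi identity and Lemma~\ref{lem:tri-tech1} to showing $[\phi_i^\epsilon(z'),F^\pm(z_2,w)]=0$, and both establish this by commuting $\phi_i^\epsilon$ past $F^\pm$ using (Q3)--(Q4), observing that the resulting scalar factor minus $1$ carries an explicit $(z_2-w)$, and then invoking $(z_2-w)F^\pm(z_2,w)=0$ from (Q7). The paper just presents the $\phi$-commutation step first and writes the factor in the form $\frac{(q^{\pm 2}-q^{\mp 2})z_1(z_2-w)}{(z_1-q^{\pm 2}z_2)(z_1-q^{\mp 2}w)}$, which is your identity $g_{ii}(u)g_{ij}(v)-1=\frac{(q^2-q^{-2})(u-v)}{(1-q^2u)(1-q^{-2}v)}$ after clearing denominators.
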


\begin{proof}Let $i,j$ be as in lemma. We first prove that for $\eta=\pm$
\begin{align}\label{eq:tri-tech2-temp2}[\phi_i^\eta(q^{\mp\frac{\eta}{2}c}z_1),\((z_2-q^{\mp 2}w)x_i^\pm(z_2)x_j^\pm(w)
    -(q^{\mp 2}z_2-w)x_j^\pm(w)x_i^\pm(z_2)\)]=0.
\end{align}
Indeed, it follows from (Q3) and (Q7) that
\begin{align*}
  &[\phi_i^+(q^{\mp\frac{1}{2}c}z_1),\((z_2-q^{\mp 2}w)x_i^\pm(z_2)x_j^\pm(w)
    -(q^{\mp 2}z_2-w)x_j^\pm(w)x_i^\pm(z_2)\)]\\
  =&\qquad\qquad\(\frac{q^{\pm 2}z_1-z_2}{z_1-q^{\pm 2}z_2}
    \frac{q^{\mp 2}z_1-w}{z_2-q^{\mp 2}w}-1\)\\
    &\cdot \((z_2-q^{\mp 2}w)x_i^\pm(z_2)x_j^\pm(w)
    -(q^{\mp 2}z_2-w)x_j^\pm(w)x_i^\pm(z_2)\)
    \phi_i^+(q^{\mp\frac{1}{2}c}z_1)\\
  =&\qquad\qquad\frac{(q^{\pm 2}-q^{\mp 2})z_1(z_2-w)}
    {(z_1-q^{\pm 2}z_2)(z_1-q^{\mp 2}w)}\\
  &\cdot\((z_2-q^{\mp 2}w)x_i^\pm(z_2)x_j^\pm(w)
    -(q^{\mp 2}z_2-w)x_j^\pm(w)x_i^\pm(z_2)\)
    \phi_i^+(q^{\mp\frac{1}{2}c}z_1)\\
  =&0.
\end{align*}
Similarly, for the case $\eta=-$, \eqref{eq:tri-tech2-temp2}   follows from (Q4) and (Q7).
Now, in view of \eqref{eq:tri-tech2-temp2} and (Q5), we have
\begin{align}\label{eq:tri-tech2-temp3}
  &[[x_i^\pm(z_1),x_k^\mp(w_0)],(z_2-q^{\mp 2}w)x_i^\pm(z_2)x_j^\pm(w)
    -(q^{\mp 2}z_2-w)x_j^\pm(w)x_i^\pm(z_2)]=0.
\end{align}
This together with \eqref{eq:tri-tech1} (with $i\ne j$) gives \eqref{eq:tri-tech2-temp1}, which proves the lemma.
\end{proof}

\begin{lem}\label{lem:tri-tech3}
For $i,j,k=0,1$ with $i\ne j$, the following equations hold in $\bar\U$:
\begin{equation}\begin{split}\label{eq:tri-decomp-tech-1}
&\sum_{\sigma\in S_{3}}\sum_{r=0}^3
(-1)^r\binom{3}{r}_{q}
x_i^\pm(z_{\sigma(1)})\cdots x_i^\pm(z_{\sigma(r)})\phi^\eta_j( q^{\mp \eta\frac{1}{2}c}w)\\
&\quad\times
x_i^\pm(z_{\sigma(r+1)})\cdots x_i^\pm(z_{\sigma(3)})=0,
\end{split}
\end{equation}
\begin{equation}\begin{split}\label{eq:tri-decomp-tech-2}
&\sum_{\sigma\in S_3}\sum_{r=0}^3
(-1)^r\binom{3}{r}_{q}
\xi_i(z_{\sigma(1)})\cdots \xi_i(z_{\sigma(r)})x_j^\pm(w)\\
&\quad\times
\xi_i(z_{\sigma(r+1)})\cdots \xi_i(z_{\sigma(3)})=0,
\end{split}\end{equation}
where $\eta=\pm$, $\xi_i(z_p)=x_i^\pm(z_p)$ if $p\ne 1$
and $\xi(z_1)=\phi_i^\eta(q^{\mp \eta \frac{1}{2}c}z_1)$.
\end{lem}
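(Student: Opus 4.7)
The plan is to prove both identities by moving the $\phi$-factor outside the Serre sum, using (Q3)--(Q4) to collect the resulting rational commutation factors, and then reducing what remains to a polynomial identity that follows from the defining relations (Q6), (Q7), (Q8) of $\bar\U$.

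\textbf{For \eqref{eq:tri-decomp-tech-1}.} I would apply (Q3) for $\eta=+$ (respectively (Q4) for $\eta=-$) to commute $\phi_j^\eta(q^{\mp\eta c/2}w)$ past every $x_i^\pm(z_{\sigma(k)})$ standing to its right in each summand. Each such swap contributes a rational factor of the form $g_{ji}(z_{\sigma(k)}/w)^{\pm 1}$ (with appropriate powers of $q^{\pm c}$ depending on the sign combinations). After complete rearrangement, the LHS takes the form
\[
\Bigl(\sum_{\sigma\in S_3} A_\sigma(z_1,z_2,z_3,w)\, x_i^\pm(z_{\sigma(1)})x_i^\pm(z_{\sigma(2)})x_i^\pm(z_{\sigma(3)})\Bigr)\,\phi_j^\eta(q^{\mp\eta c/2}w),
\]
where the rational functions $A_\sigma$ encode the Serre coefficients $(-1)^r\binom{3}{r}_q$ together with the collected commutation factors. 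Using (Q6) to relate the six ordered products of three $x_i^\pm$-generators, the bracketed sum reduces (upon clearing denominators) to a polynomial identity in $z_1,z_2,z_3,w$ which is verified by direct expansion.

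\textbf{For \eqref{eq:tri-decomp-tech-2}.} The same strategy applies with $\phi_i^\eta(q^{\mp\eta c/2}z_1)$ moved past $x_i^\pm(z_2)$, $x_i^\pm(z_3)$, and $x_j^\pm(w)$. Crossings with $x_i^\pm(z_k)$ produce $g_{ii}$-factors and the single crossing with $x_j^\pm(w)$ produces a $g_{ij}$-factor; the reciprocity $g_{ii}(z)g_{ij}(z)=1$, a special feature of the $A_1^{(1)}$ Cartan matrix, simplifies the bookkeeping considerably. This time the resulting polynomial identity does not close up using only (Q6) and (Q7), because (Q7) carries the extra $(z-w)$-torsion. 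The new relation (Q8) precisely absorbs the excess terms, closing the identity in $\bar\U$.

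\textbf{Main obstacle.} The hardest step is the verification for \eqref{eq:tri-decomp-tech-2}, where (Q8) enters essentially to kill terms produced by the $(z-w)$-torsion of (Q7). This parallels the role of (Q8) in the proof of Lemma \ref{lem:tri-tech2} and is the direct manifestation of the principle, stated in the introduction, that ``the new relation \eqref{intro-re3} appeared naturally'' in establishing the triangular decomposition.
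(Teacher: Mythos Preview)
Your proposal is correct and follows essentially the same route as the paper: for both identities one commutes the $\phi$-factor to one side via (Q3)--(Q4), collects the resulting rational factors, and reduces to a polynomial identity handled by (Q6)--(Q8). The paper organizes the computation for \eqref{eq:tri-decomp-tech-2} by writing the collected coefficient explicitly as $D^\pm\sum P_r(\cdots)$ and invoking Hernandez's decomposition of the polynomials $P_r$ (Lemma~6 of \cite{He-representation-coprod-proof}) to isolate the (Q6)- and (Q8)-shaped pieces, which is exactly the ``polynomial identity'' you leave implicit; your observation that $g_{ii}g_{ij}=1$ is the rational-function shadow of this same cancellation.
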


\begin{proof} Equation \eqref{eq:tri-decomp-tech-1} can be  proved as that of
\cite[Eq. (20)]{He-representation-coprod-proof} and we omit the details.
For \eqref{eq:tri-decomp-tech-2}, we will prove the case of  $\eta=+$, the other case $\eta=-$ is similar.
Denote by $R^\pm$ the LHS of \eqref{eq:tri-decomp-tech-2}.
Then it follows from the relations (Q3-Q4) that
\begin{align*}
R^\pm=D^\pm\sum_{\pi\in S_{2}}
\sum_{r=1}^{3}&
    P_{r}(z_{1},z_{\pi(2)},z_{\pi(3)},w,q^{\pm 1})
    x_i^\pm(z_{\pi(2)})\cdots x_i^\pm(z_{\pi(r)})x_j^\pm(w)\\
    &\times x_i^\pm(z_{\pi(r+1)})\cdots x_i^\pm(z_{\pi(3)})
    \phi_i^+(q^{\mp\frac{1}{2}c}z_{1}),
\end{align*}
where   $S_{2}$ acts on the set $\{2,3\}$ and for $1\le r\le 3$,
\begin{align*}
D^\pm&=\frac{1}{z_1-q^{\mp 2}w}
    \prod_{2\le a\le 3}
    \frac{1}{z_1-q^{\pm 2 }z_a},
\end{align*}
and
\begin{align*}
&P_r(z_1,z_2,z_3,w,q)\\
=&\binom{3}{r}_q(-1)^r
    \sum_{p=1}^r \prod_{2\le a\le p}\(z_1-q^2z_a\)
    \prod_{p<a\le 3}\(q^{2}z_1-z_a\)
    \( q^{a_{ij}}z_1-w\)\\
    &\,+\binom{3}{r-1}_q(-1)^{r-1}
    \sum_{p=r}^{3} \prod_{2\le a\le p}\(z_1-q^2z_a\)
    \prod_{p<a\le 3}\(q^2z_1-z_a\)
    \( z_1-q^{a_{ij}}w\).
\end{align*}

It was proved in \cite[Lemma 6]{He-representation-coprod-proof} that
\begin{align}
P_1(z_1,&z_2,z_3,w,q)=(z_2-q^2w)
f^{(1)}_{3}(z_1,z_3,w,q)+(z_{3}-q^{-2}z_{2})
f^{(1)}_2(z_1,w,q),\label{eq:tri-decomp-temp-P1}\\
P_2(z_1,&z_2,z_3,w,q)=(w-q^2z_2)
f^{(2)}_{2}(z_1,z_3,w,q)
+(z_{3}-q^{2}w)
f^{(2)}_{3}(z_1,z_2,w,q)\label{eq:tri-decomp-temp-P2},\\
P_3(z_1,&z_2,z_3,w,q)=(w-q^2z_3)
f^{(3)}_{3}(z_1,z_{2},w,q)+
(z_{3}-q^{-2}z_{2})
f^{(3)}_2(z_1,w,q),\label{eq:tri-decomp-temp-P3}
\end{align}
where
$f_2^{(r)}$ and $f_3^{(r)}$ are some polynomials
of degree at most $1$ in each variable.

In view of \eqref{eq:tri-decomp-temp-P1}, \eqref{eq:tri-decomp-temp-P2}, \eqref{eq:tri-decomp-temp-P3} and
(Q6), we have
\begin{align*}
&\sum_{\pi\in S_{2}}
P_r(z_1,z_{\pi(2)},z_{\pi(3)},w,q^{\pm 1})
 x_i^\pm(z_{\pi(2)})x_i^\pm(z_{\pi(3)}) x_j^\pm(w)\phi_i^+(q^{\mp\frac{1}{2}c}z_{1})=0,\\
&\sum_{\pi\in S_{2}}
P_r(z_1,z_{\pi(2)},z_{\pi(3)},w,q^{\pm 1})
 x_j^\pm(w)x_i^\pm(z_{\pi(2)})x_i^\pm(z_{\pi(3)})
    \phi_i^+(q^{\mp\frac{1}{2}c}z_{1})=0.
\end{align*}
This implies that all the terms in $R^\pm$ which contain the polynomials $f^{(r)}_a$ with $a\ne r,3$ can be erased.
Thus, we  obtain
\begin{align*}
R^\pm=&D^\pm\sum_{\pi\in S_2}(z_{\pi(2)}-q^{\pm 2}w)
    f_3^{(1)}(z_1,z_{\pi(3)},w,q^{\pm 1})
    x_j^\pm(w)x_i^\pm(z_{\pi(2)})x_i^\pm(z_{\pi(3)})
    \phi_i^+(q^{\mp\frac{1}{2}c}z_{1})\\
    -&D^\pm\sum_{\pi\in S_2}(q^{\pm 2}z_{\pi(2)}-w)
    f_2^{(2)}(z_1,z_{\pi(3)},w,q^{\pm 1})
    x_i^\pm(z_{\pi(2)})x_j^\pm(w)x_i^\pm(z_{\pi(3)})
    \phi_i^+(q^{\mp\frac{1}{2}c}z_{1})\\
    +&D^\pm\sum_{\pi\in S_2}(z_{\pi(3)}-q^{\pm 2}w)
    f_3^{(2)}(z_1,z_{\pi(2)},w,q^{\pm 1})
    x_i^\pm(z_{\pi(2)})x_j^\pm(w)x_i^\pm(z_{\pi(3)})
    \phi_i^+(q^{\mp\frac{1}{2}c}z_{1})\\
    -&D^\pm\sum_{\pi\in S_2}(q^{\pm 2}z_{\pi(3)}-w)
    f_3^{(3)}(z_1,z_{\pi(2)},w,q^{\pm 1})
    x_i^\pm(z_{\pi(2)})x_i^\pm(z_{\pi(3)})x_j^\pm(w)
    \phi_i^+(q^{\mp\frac{1}{2}c}z_{1}).
\end{align*}
A straightforward calculation shows that
\begin{align*}
  &f_3^{(1)}(z_1,z_3,w,q)=
  f_2^{(2)}(z_1,z_3,w,q)=Q(z_1-z_3),\\
  &f_3^{(2)}(z_1,z_2,w,q)
  =f_3^{(3)}(z_1,z_2,w,q)=-Q(z_1-z_2),
\end{align*}
where
\begin{align*}
  Q=(q^{-4}-q^{4}+q^{-2}-q^{2})z_1.
\end{align*}
Then we have
\begin{align}
  R^\pm=\pm& Q D^\pm\sum_{\pi\in S_2}(z_{\pi(2)}-q^2w)
    (z_1-z_{\pi(3)})
    x_j^\pm(w)x_i^\pm(z_{\pi(2)})x_i^\pm(z_{\pi(3)})
    \phi_i^+(q^{\mp\frac{1}{2}c}z_{1})\notag\\
    \mp&Q D^\pm\sum_{\pi\in S_2}(q^2z_{\pi(2)}-w)
    (z_1-z_{\pi(3)})
    x_i^\pm(z_{\pi(2)})x_j^\pm(w)x_i^\pm(z_{\pi(3)})
    \phi_i^+(q^{\mp\frac{1}{2}c}z_{1})\notag\\
    \pm&Q D^\pm\sum_{\pi\in S_2}(z_{\pi(3)}-q^2w)
    (z_1-z_{\pi(2)})
    x_i^\pm(z_{\pi(2)})x_j^\pm(w)x_i^\pm(z_{\pi(3)})
    \phi_i^+(q^{\mp\frac{1}{2}c}z_{1})\notag\\
    \mp&Q D^\pm\sum_{\pi\in S_2}(q^2z_{\pi(3)}-w)
    (z_1-z_{\pi(2)})
    x_i^\pm(z_{\pi(2)})x_i^\pm(z_{\pi(3)})x_j^\pm(w)
    \phi_i^+(q^{\mp\frac{1}{2}c}z_{1})\notag\\
    =\mp& Q D^\pm q^{\pm 2}\sum_{\pi\in S_2}(z_1-z_{\pi(2)})\big[x_i^\pm(z_{\pi(2)}),
   (z_{\pi(3)}-q^{\mp 2}w)x_i^\pm(z_{\pi(3)})x_j^\pm(w)\label{newrela1}\\
    &\quad\quad
    -(q^{\mp 2}z_{\pi(3)}-w)x_j^\pm(w)x_i^\pm(z_{\pi(3)})\big]
    \phi_i^+(q^{\mp\frac{1}{2}c}z_{1})\notag\\
    =0&,\notag
\end{align}
where the last equation follows from (Q8).
\end{proof}

As in the proof of \cite[Lemma 10]{He-representation-coprod-proof}, it is obvious that Lemma \ref{lem:tri-tech3} implies the following result.

\begin{lem}\label{lem:tri-tech4}
For $i,j,k=0,1$ with $i\ne j$, the following equations hold in $\bar\U$:
\begin{align*}
[\sum_{\sigma\in S_3}\sum_{r=0}^3(-1)^r\binom{3}{r}_q
   x_i^\pm(z_{\sigma(1)})\cdots x_i^\pm(z_{\sigma(r)})x_j^\pm(w)
   x_i^\pm(z_{\sigma(r+1)})\cdots x_i^\pm(z_{\sigma(3)}),x_k^\mp(w_0)]=0.
   \end{align*}
\end{lem}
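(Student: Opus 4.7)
The plan is to expand the commutator via the Leibniz rule, apply (Q5) to each single-factor commutator that arises, and identify the surviving terms with the expressions treated in Lemma \ref{lem:tri-tech3}. The Serre expression (the left-hand side of (Q9)) is a sum of monomials each containing three $x_i^\pm$-factors and one $x_j^\pm$-factor. Since $i\ne j$ and $k\in\{i,j\}$, relation (Q5) implies that only factors with index $k$ yield a nonzero commutator with $x_k^\mp(w_0)$, while all other factors pass through commutatively. The argument then splits into the cases $k=j$ and $k=i$.

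For $k=j$, only the unique $x_j^\pm(w)$ factor in each monomial contributes; by (Q5) it is replaced by
\[
\frac{1}{q-q\inv}\Big(\phi_j^+(wq^{-\half c})\delta\!\left(\tfrac{q^c w_0}{w}\right)-\phi_j^-(wq^{\half c})\delta\!\left(\tfrac{q^{-c}w_0}{w}\right)\Big).
\]
Pulling the scalar delta functions outside the sum, the commutator equals a linear combination of the left-hand sides of \eqref{eq:tri-decomp-tech-1} for $\eta=+$ and $\eta=-$, both of which vanish by Lemma \ref{lem:tri-tech3}.

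For $k=i$, each of the three $x_i^\pm(z_{\sigma(m)})$ factors in a monomial contributes. Grouping contributions according to which argument $z_n$ is affected (for each $\sigma$ there is a unique position $m$ with $\sigma(m)=n$), we obtain three terms $T_1,T_2,T_3$ in which the single replacement
\[
x_i^\pm(z_n)\longmapsto \frac{1}{q-q\inv}\Big(\phi_i^+(z_nq^{-\half c})\delta\!\left(\tfrac{q^c w_0}{z_n}\right)-\phi_i^-(z_n q^{\half c})\delta\!\left(\tfrac{q^{-c}w_0}{z_n}\right)\Big)
\]
is performed while the remaining $S_3$-symmetrized expression is left intact. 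Extracting the scalar delta-factors, each $T_n$ is a linear combination of the two instances $(\eta=\pm)$ of \eqref{eq:tri-decomp-tech-2} with the distinguished variable $z_1$ relabeled as $z_n$. By Lemma \ref{lem:tri-tech3} (applied after relabeling of the three $z$-variables), each $T_n$ vanishes, and hence the total commutator is zero.

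The main technical obstacle has already been absorbed into Lemma \ref{lem:tri-tech3}, whose proof of \eqref{eq:tri-decomp-tech-2} was the place where the new defining relation (Q8) entered essentially, via the calculation culminating in \eqref{newrela1}. Here the remaining work is bookkeeping: identifying which factor in each monomial carries the nonzero (Q5) commutator, and pulling the scalar delta functions through the symmetrized sum. Once this matching is in place, Lemma \ref{lem:tri-tech3} delivers the claim directly.
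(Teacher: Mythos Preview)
Your proposal is correct and follows exactly the paper's approach: the paper simply states that Lemma~\ref{lem:tri-tech3} implies Lemma~\ref{lem:tri-tech4} ``as in the proof of \cite[Lemma 10]{He-representation-coprod-proof}'', and what you have written is precisely the standard unpacking of that implication---apply the Leibniz rule, use (Q5) to kill all cross-index commutators, and recognize the surviving terms (with one $x^\pm$-factor replaced by a $\phi^\eta$) as instances of \eqref{eq:tri-decomp-tech-1} (for $k=j$) and \eqref{eq:tri-decomp-tech-2} (for $k=i$, after relabeling the $z$-variables).
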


\textbf{Proof of Theorem \ref{thm:tri-decomp}:} We first recall a general result of triangular decompositions (cf. \cite[Lemma 3.5]{He-representation-coprod-proof}).
Let $A$ be a completed and separated   $\C[[\hbar]]$-algebra and
$(A^-,H,A^+)$ a triangular decomposition of $A$. Let $B^+$ and $B^-$ be respectively a closed two-sided ideal of
$A^+$ and $A^-$, and let $B$ be the closed ideal of $A$ generated by $B^++B^-$.
Set $C=A/B$ and denote by $C^\pm$ the image of
$B^\pm$ in $C$.
Assume that $A B^+\subset B^+ A$ and $B^- A\subset A B^-$. Then  $(C^+,H,C^-)$ is a triangular decomposition
of $C$ and $C^\pm$ are isomorphic to $A^\pm/B^\pm$.
In view of this criterion, Theorem \ref{thm:tri-decomp}
follows from Lemmas \ref{lem:tri-tech0}, \ref{lem:tri-tech1},
\ref{lem:tri-tech2} and \ref{lem:tri-tech4}.

\begin{rem}{\em It was proved in \cite{He-representation-coprod-proof} that the algebra $\U_\hbar(\dot\g_{tor})$ has a  triangular decomposition
as in Theorem \ref{thm:tri-decomp}.
That is, $\U$ and $\U_\hbar(\dot\g_{tor})$ are two different choices of the quotient algebras of $\U_\hbar(\hat\g)$ with
a triangular decomposition.
}
\end{rem}

\section{Hopf algebra structure}

In this section, we discuss a 
Hopf algebra structure on $\U$.
For a $\C[[\hbar]]$-module $M$ and $n\in \mathbb N$, we denote that 
\begin{align}
  M^{\wh\ot^n}=
  \belowit{\underbrace{M\wh\ot M \wh\ot\cdots\wh\ot M}}
  {n\te{-copies}}.
\end{align} Let $u,v$ be formal variables.  Motivated by the deformed Drinfeld coproduct given in \cite{He-drinfeld-coproduct}, we have:
\begin{thm}\label{thm:bialg-struct}
There exists a unique $\C[[\hbar]]$-algebra homomorphism
$\Delta_u:\U\rightarrow\big(\U^{\wh\ot^2}\big) ((u))$ defined as follows $(i=0,1)$
\begin{eqnarray*}
&&\te{(Co1) }\qquad\qquad\Delta_u(c)=c\ot 1+1\ot c,\qquad\qquad\\
&&\te{(Co2) }\qquad\qquad\Delta_u\left(\phi_i^\pm(z)\right)
    =\phi_i^\pm(zq^{\pm\frac{c_2}{2}})\otimes \phi_i^\pm(zu\inverse q^{\mp\frac{c_1}{2}}),\qquad\qquad\\
&&\te{(Co3) }\qquad\qquad\Delta_u\left(x_i^+(z)\right)
    =x_i^+(z)\otimes 1+\phi_i^-(zq^{\frac{c_1}{2}})\otimes x_i^+(zu\inverse q^{c_1}),\qquad\qquad\\
&&\te{(Co4) }\qquad\qquad\Delta_u\left(x_i^-(z)\right)
    =1\otimes x_i^-(zu\inverse)+x_i^-(zq^{c_2})\otimes \phi_i^+(zu\inverse q^{\frac{c_2}{2}}),\qquad\qquad
\end{eqnarray*}
where $c_1=c\ot 1$ and $c_2=1\otimes c$.
Moreover, as $\C[[\hbar]]$-algebra homomorphisms
$\U\rightarrow\big(\U^{\wh\ot^3}\big)((u,v))$,
we have
\begin{align*}
  &\(\mathrm{Id}\otimes \Delta_v\)\circ \Delta_u
  =\(\Delta_u\otimes \mathrm{Id}\)\circ \Delta_{uv}.
\end{align*}
and, as $\C[[\hbar]]$-algebra homomorphisms
$\U\rightarrow\big(\U^{\wh\ot^2}\big)((u))$,
we have
\begin{align*}
  &\(\mathrm{Id}\otimes \epsilon\)\circ\Delta_u=\mathrm{Id},\quad
  \(\epsilon\otimes \mathrm{Id}\)\circ\Delta_u=\mathrm{Id}_u,
\end{align*}
where $\mathrm{Id}_u:\U\rightarrow \U\otimes \C((u))$ is the $\C[[\hbar]]$-algebra
homomorphism determined by
\begin{eqnarray*}
&\te{(Id1) }\qquad\qquad&\mathrm{Id}_u(c)=c,\quad \mathrm{Id}_u(\phi_i^\pm(z))=\phi_i^\pm(zu\inverse),\qquad\qquad\\
&\te{(Id2) }\qquad\qquad&\mathrm{Id}_u(x_i^\pm(z))=x_i^\pm(zu\inverse),\qquad\qquad
\end{eqnarray*}
and $\epsilon:\U\rightarrow \C[[\hbar]]$ is the $\C[[\hbar]]$-algebra homomorphism  determined by
\begin{eqnarray*}
&\te{(CoU) }\qquad\qquad \epsilon\(\phi_i^\pm(z)\)=1,\quad
\epsilon\(x_i^\pm(z)\)=0=\epsilon(c).\qquad\qquad
\end{eqnarray*}
\end{thm}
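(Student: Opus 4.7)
The plan is to carry out three tasks: (i) prove $\Delta_u$ is a well-defined $\C[[\hbar]]$-algebra homomorphism, (ii) verify coassociativity, and (iii) verify the counit axioms. Tasks (ii) and (iii) reduce to direct computations on generators together with bookkeeping of the central shifts $c_1,c_2,c_3$ and spectral shifts in $u,v$; the weight of the proof is on (i).

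For (i), we must show that each of the relations (Q1)-(Q9) is preserved by the formulas (Co1)-(Co4). Relations (Q1)-(Q6) involve only a single index $i$, or purely commutation between Cartan-like and root currents, and the corresponding identities in $\bigl(\U^{\wh\ot^2}\bigr)((u))$ reduce to computations inside the quantum affinization $\U_\hbar(\wh\g)$ of type $A_1^{(1)}$ along the lines of \cite{He-drinfeld-coproduct}. For (Q7) with $i\ne j$, we expand $\Delta_u(x_i^\pm(z))\Delta_u(x_j^\pm(w))$ using (Co3)-(Co4); the four resulting tensor components split into ``vertical'' pieces that already satisfy (Q7) factorwise (the $\phi$-currents being commuted past via (Q3)-(Q4), producing $g_{ij}$-type rational factors) and ``mixed'' pieces which would require the stronger relation \eqref{strongerrela} in a single tensor slot but in fact cancel after symmetrisation in $z,w$ thanks to the extra factor $(z-w)$ which distinguishes (Q7) from \eqref{strongerrela}.

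The principal obstacle is the Serre relation (Q9), and this is exactly where the new relation (Q8) enters the coproduct compatibility. After applying $\Delta_u$ to the LHS of (Q9) and expanding via (Co3)-(Co4), the resulting sum of tensor monomials separates by how many $x_i^\pm$, $x_j^\pm$ factors land in each slot. The purely one-sided components vanish by (Q9) in each factor. The mixed components are regrouped; a computation entirely parallel to the one carried out for the triangular decomposition in Lemma \ref{lem:tri-tech3} (see in particular the rearrangement \eqref{newrela1}) shows that the surviving terms assemble into the commutator on the LHS of (Q8) tensored with a $\phi$-current in the remaining slot, and hence vanish by (Q8). The verification of (Q8) itself under $\Delta_u$ is a smaller version of the same calculation: use (Q7) and the ``vertical'' relations (Q3)-(Q4) to reduce the expansion to a single instance of (Q8) per tensor slot.

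Once (i) is established, coassociativity is checked on each generator: for $\phi_i^\pm(z)$, both sides produce the triple tensor whose three spectral parameters are $z,\,zu\inv,\,zu\inv v\inv$ (with central shifts accumulating linearly in $c$); for $x_i^+(z)$ one tracks the three summands $x_i^+\ot 1\ot 1$, $\phi_i^-\ot x_i^+\ot 1$, $\phi_i^-\ot \phi_i^-\ot x_i^+$ (with matching shifts), and similarly for $x_i^-(z)$. The counit identities follow at once from (CoU), since $\epsilon$ annihilates each $x_i^\pm$ and sends each $\phi_i^\pm(z)$ to $1$; thus the left factor in $\Delta_u$ collapses to the identity while the right factor produces the spectral shift $z\mapsto zu\inv$, i.e.\ $\mathrm{Id}_u$. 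The hard part, as signalled by the authors, is purely the Serre compatibility: once (Q8) has been built into the algebra it provides exactly the cancellation needed to make the Drinfeld coproduct descend to $\U$.
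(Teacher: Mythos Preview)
Your outline is correct and follows the same route as the paper: first check (Q1)--(Q7) directly (the paper packages this as Lemma~\ref{lem:coprod-Q1-Q8}), then verify (Q8) by the computation you describe (Lemma~\ref{lem:coprod-Q9}), and finally reduce the Serre relation (Q9) to instances of (Q8). Two small corrections are worth flagging. First, in the (Q7) check the mixed tensor pieces already cancel in the combination $A_{ij}^\pm(z,w)$ \emph{without} invoking the extra factor $(z-w)$---the cancellation comes purely from the $g_{ij}$-identities (Q3)--(Q4); the factor $(z-w)$ is only needed afterwards to kill the surviving diagonal pieces $x_{ij}^\pm\otimes 1$ and $\phi\phi\otimes x_{ij}^\pm$ via (Q7). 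Second, your phrase ``entirely parallel to Lemma~\ref{lem:tri-tech3}'' understates what is needed for (Q9): in the coproduct one must sum over all $(s,3{-}s)$-shuffles $\sigma\in S_{3,s}$ governing how the three $x_i^\pm$'s distribute between the two tensor slots, and the paper handles this via an explicit family of polynomials $T_{s,k}^1,T_{s,k}^2$ (Lemmas~\ref{lem:coprod-Q10-2} and~\ref{lem:coprod-Q10-1}) whose factorisations play the role that the $P_r$-identities from \cite{He-representation-coprod-proof} played in Lemma~\ref{lem:tri-tech3}; only after that bookkeeping does the residual expression collapse to the (Q8)-commutator form \eqref{newrela2}.
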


Before proving Theorem \ref{thm:bialg-struct}, we remark that the above 
gives a (topological) Hopf algebra structure on $\U$ (by $\Delta_1$, the ``limit" of $\Delta_u$ at $u=1$).
However, one notices that
$\Delta_1$ is not a well-defined $\C[[\hbar]]$-algebra homomorphism from $\U$ to $\U^{\wh\ot^2}$, so
we need to introduce certain topological completions of $\U$ and $\U^{\wh\ot^2}$ as in \cite{Gr-qshuff-qaff}.
Explicitly,
let $\mathcal F$ be the free $\C[[\hbar]]$-algebra topologically generated by the set \eqref{eq:gen-set}.
Now give $h_{i,\pm n}$ degree $n$ for $n>0$, and
all other elements degree $0$. We extend the degree to all the elements of the algebra by summation on the monomials.
For $k\ge0$, let $\mathcal F_k$ be the $\hbar$-adically closed ideal of $\mathcal F$ generated by elements of degree greater than $k$.
Then we obtain an inverse system of $\C[[\hbar]]$-algebras $(\mathcal F/\mathcal F_k, p_k)$,
where $p_k$ is the natural projection $\mathcal F/\mathcal F_k\to \mathcal F/\mathcal F_{k-1}$.
Denote by $\mathcal F_c$ the $\hbar$-adic completion of the inverse limit
$\varprojlim\mathcal F/\mathcal F_k$.
Note that $\mathcal F_c$ is a complete and separated algebra over $\C[[\hbar]]$ with inverse limit topology.
Let $K$ be the closed ideal of $\mathcal F_c$ generated by the relations (Q1-Q10).
Set
\begin{align*}
  \U_c=\mathcal F_c/K,
\end{align*}
 a completion of $\U$.
Note that there is a canonical injection from $\U$ to $\U_c$.

Now, we consider the space $\U^{\wh\ot^2}$. In this case, we view $\mathcal F$ as a $\Z$-graded algebra by giving $x_{i,\pm n}^+$, $x_{i,\pm n}^-$, $h_{i,\pm n}$ degree $n$ for $n\ge 0$, and give other generators degree $0$.
Denote by $\mathcal F_k'$ the closed two sided ideal of $\mathcal F$ of elements of degree at least $k$.
One notices that $\mathcal F_k'$ is a strict subset of $\mathcal F_k$.
Let $\mathcal F\bar\ot\mathcal F$ be the topological completion of the inverse limit
\begin{align*}
  \mathcal F\wh\ot \mathcal F/\mathcal F_k'\wh\ot\mathcal F_k'.
\end{align*}
Then  $\mathcal F\bar\ot\mathcal F$ is also a complete and separated algebra over $\C[[\hbar]]$.
Define $\U_c\wt\ot\U_c$ to be the quotient algebra of $\mathcal F\bar\ot\mathcal F$ modulo the closure of $K\wh\ot\mathcal F+\mathcal F\wh\ot K$.
It is easy to see that there is a canonical injection from $\U\wh\ot \U$ to $\U_c\wt\ot\U_c$.
Using these completions, we deduce from Theorem \ref{thm:bialg-struct} that
$(\U_c,\Delta_1,\epsilon)$ carries a $\C[[\hbar]]$-bialgebra structure.
Furthermore, by the same argument as in the proof of
\cite[Theorem 2.1]{DI-generalization-qaff}, we have the following result.

\begin{thm}\label{thm:hopf}
$\U_c$ is a Hopf algebra with coproduct $\Delta_1$,
counit $\epsilon$
and the antipode $S$ defined by $(i=0,1)$
\begin{align*}
  &S(c)=-c,\qquad
  S\(x_i^+(z)\)=-\phi_i^-(zq^{-\frac{c}{2}})\inverse x_i^+(zq^{-c}),\\
  &S\(x_i^-(z)\)=-x_i^-(zq^{-c})\phi_i^+(zq^{-\frac{c}{2}}),\quad  S\(\phi_i^\pm(z)\)=\phi_i^\pm(z)\inverse.
\end{align*}
\end{thm}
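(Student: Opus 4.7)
The plan is to establish the Hopf algebra structure on $\U_c$ by specializing the parameter $u$ to $1$ in the formulas of Theorem \ref{thm:bialg-struct} and then verifying the antipode axiom on generators. Since the algebraic identities needed all follow from Theorem \ref{thm:bialg-struct}, the essential content lies in checking that the substitution $u=1$ makes sense in the appropriate topological completion, and that the proposed $S$ is a well-defined continuous anti-homomorphism compatible with the new relation (Q8).

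First I would show that $\Delta_1$ extends to a continuous $\C[[\hbar]]$-algebra homomorphism $\U_c \to \U_c \wt\ot \U_c$. The obstruction to simply setting $u=1$ in $\Delta_u$ is that, for instance, $\Delta_u(x_i^+(z))$ contains the factor $\phi_i^-(zq^{c_1/2}) \ot x_i^+(zu\inv q^{c_1})$, and expanding $\phi_i^-$ as a series in the $h_{i,-n}$'s produces arbitrarily high-degree terms in the first tensor slot. The filtration used to define $\U_c \wt\ot \U_c$ assigns degree $n$ to $h_{i,\pm n}$ and $x_{i,\pm n}^\pm$, so these sums converge in the completion once $u$ is set to $1$. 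Compatibility of $\Delta_1$ with each of the relations (Q1)-(Q9) then follows by continuity from the corresponding identity for $\Delta_u$.

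Second, the bialgebra axioms transfer to $\Delta_1$ by specialization: coassociativity is obtained from $(\mathrm{Id} \ot \Delta_v) \circ \Delta_u = (\Delta_u \ot \mathrm{Id}) \circ \Delta_{uv}$ at $u=v=1$, and the counit identities follow from $(\mathrm{Id} \ot \epsilon) \circ \Delta_u = \mathrm{Id}$ and $(\epsilon \ot \mathrm{Id}) \circ \Delta_u = \mathrm{Id}_u$ at $u=1$, using $\mathrm{Id}_1 = \mathrm{Id}$. For the antipode, I would first show that the proposed $S$ defines a continuous anti-homomorphism $\U_c \to \U_c$ by checking that the prescribed images satisfy each relation (Q1)-(Q9); the key ingredients here are (Q3)-(Q4), which allow one to commute $\phi_i^\pm$-factors past $x_j^\pm$-factors. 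The antipode identity $m \circ (\mathrm{Id} \ot S) \circ \Delta_1 = \eta \epsilon$ is then verified on generators by direct computation: on $x_i^+(z)$ for example, the two terms telescope to
\begin{align*}
  x_i^+(z) - \phi_i^-(zq^{c/2}) \phi_i^-(zq^{c/2})\inv x_i^+(z) = 0
\end{align*}
after using the fact that $m(c_1) = m(c_2) = c$ in $\U_c$; the cases of $x_i^-(z)$, $\phi_i^\pm(z)$, and $c$ are analogous.

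The main obstacle will be verifying that $S$ respects the new relation (Q8) and the affine quantum Serre relation (Q9); these are nested commutators in the $x^\pm$'s, and after applying $S$ one must reinterpret them in $\U^{\te{op}}$ and use conjugation by $\phi_i^\pm$-factors coming from (Q3)-(Q4) to recover the original form. Once these compatibilities are established, the remaining bookkeeping closely follows the argument of \cite[Theorem 2.1]{DI-generalization-qaff}, with the modification that our middle quotient $\U$ retains (Q7) and (Q8) in place of the stronger relation \eqref{strongerrela}.
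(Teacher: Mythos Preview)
Your proposal is correct and follows essentially the same route as the paper: the paper does not give a standalone proof of this theorem, but rather observes that the bialgebra structure on $\U_c$ is obtained from Theorem \ref{thm:bialg-struct} by setting $u=1$ in the completions described just before the statement, and then refers the antipode verification to the argument of \cite[Theorem 2.1]{DI-generalization-qaff}. Your outline spells out precisely these two steps, together with the additional care required to check that $S$ is compatible with the new relation (Q8), which is the only point not already covered verbatim by the Ding--Iohara computation.
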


The rest of this section is devoted to proving Theorem \ref{thm:bialg-struct}.
Recall the algebras $\wh\U$ and $\bar\U$ introduced in Definition \ref{de:algebras}.
Firstly, we have the following straightforward result.

\begin{lem}\label{lem:coprod-Q1-Q8}
(Co1-Co4) defines a unique $\C[[\hbar]]$-algebra homomorphism $\wh\Delta_u:\wh\U\to\big(\wh\U^{\wh\ot^2}\big)((u))$.
\end{lem}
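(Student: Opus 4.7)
The approach is direct verification. Since the free $\C[[\hbar]]$-algebra topologically generated by \eqref{eq:gen-set} admits a unique continuous homomorphism into $(\wh\U^{\wh\ot^2})((u))$ sending the generators according to (Co1)--(Co4), it suffices to check that the images of the defining relations (Q1)--(Q7) of $\wh\U$ vanish in the target. I would take them in the order (Q1), (Q2), (Q3), (Q4), (Q6), (Q7), (Q5), exploiting each verified relation in the next.

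First I would dispatch (Q1) and (Q2), which reduce to the observation that $\wh\Delta_u(\phi_i^\pm(z))$ is a tensor product of two shifted copies of $\phi_i^\pm$ in the respective tensor factors; the required commutation and exchange identities then split factor-by-factor and follow from (Q1)--(Q2) inside $\wh\U$, once one tracks the shifts by the central elements $c_1,c_2$. Relations (Q3) and (Q4) are handled by expanding $\wh\Delta_u(\phi_i^\eta(z))\wh\Delta_u(x_j^\pm(w))$ into two summands and checking that the rational factors $g_{ij}$ produced by applying (Q3)--(Q4) in each tensor slot recombine, after the $u$-shifts and the shifts by $c_1,c_2$, into the single rational factor demanded by (Q3)/(Q4) in the target.

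Relations (Q6) and (Q7) form the most computational step. Here I would expand $\wh\Delta_u(x_i^\pm(z))\wh\Delta_u(x_j^\pm(w))$ into four summands. The two ``pure'' summands already satisfy the required quadratic identity either trivially or via (Q6)/(Q7) applied in a single tensor factor, while the two ``cross'' summands must be reordered using the already verified (Q3)/(Q4) to move a $\phi_i^-$ past $x_j^\pm$ inside one tensor slot; the rational prefactors produced in this way collapse against the polynomial $(z-q^{\pm 2}w)$ in (Q6), and against $(z-q^{\mp 2}w)(z-w)$ in (Q7), to close the relation. Crucially, no Serre-type identity is needed at this stage, which is exactly why (Q7) (rather than its strengthening in the definition of $\bar\U$) suffices to define $\wh\Delta_u$ on $\wh\U$; (Q8) and (Q9) are dealt with separately in the companion lemmas.

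The hard part will be (Q5). I would expand $[\wh\Delta_u(x_i^+(z)),\wh\Delta_u(x_j^-(w))]$ into four terms; the two off-diagonal terms involve either $[x_i^+(\cdot),x_j^-(\cdot)]$ with $i\ne j$ (vanishing by (Q5) in $\wh\U$) or two currents living in different tensor factors (which simply commute), while the two diagonal terms produce delta-function contributions via (Q5) in a single tensor slot. Closing the identity then requires the standard manipulation $f(z)\delta(w/z)=f(w)\delta(w/z)$, combined with (Q3)--(Q4) just verified, to relocate the surviving $\phi_i^\pm$ into the correct tensor factor so that the two diagonal contributions collapse into $\wh\Delta_u$ applied to the right-hand side of (Q5). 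The main obstacle is the careful bookkeeping of the half-integral central shifts $q^{\pm c_1/2}, q^{\pm c_2/2}$ and their interaction with the $u$-shifts during these delta-function substitutions; this is the one place where the verification is easy to get wrong, and once it is pinned down the lemma follows.
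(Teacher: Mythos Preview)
Your proposal is correct and is exactly the direct verification the paper has in mind; the paper does not give a proof at all, calling the lemma a ``straightforward result'' and moving on to the harder Lemmas \ref{lem:coprod-Q9} and \ref{lem:coprod-Q10-2}--\ref{lem:coprod-Q10-1}. One small remark: in your treatment of (Q7) the extra factor $(z-w)$ plays no active role for the cross terms---it is simply carried along on both sides, since the commutation of $\phi_i^-$ past $x_j^+$ already produces precisely the factor $(z-q^{\mp 2}w)/(q^{\mp 2}z-w)$---so your phrase ``collapse against $(z-q^{\mp 2}w)(z-w)$'' slightly overstates what happens, but the conclusion is unaffected.
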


Furthermore,  we have

\begin{lem}\label{lem:coprod-Q9}
$\wh\Delta_u$ induces a $\C[[\hbar]]$-algebra homomorphism from $\bar\Delta_u:\bar\U\to\(\bar\U^{\wh\ot^2}\)((u))$.
\end{lem}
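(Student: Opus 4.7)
The task is to show that the homomorphism $\wh\Delta_u:\wh\U\to(\wh\U^{\wh\ot^2})((u))$ respects the relation (Q8), so that it descends to $\bar\U\to(\bar\U^{\wh\ot^2})((u))$. The plan is to apply $\wh\Delta_u$ to the LHS of (Q8), expand, and show that the resulting expression lies in the closed ideal generated by the (Q8) relations in each tensor slot.

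Fix the sign $\pm$ and indices $i\neq j\in\{0,1\}$. By (Co3),
\[
  \wh\Delta_u(x_i^+(z))=x_i^+(z)\ot 1+\phi_i^-(zq^{c_1/2})\ot x_i^+(zu^{-1}q^{c_1}),
\]
and analogously for $j$, and (Co4) gives the ``$-$'' version. Writing $T_k^\pm(z):=\wh\Delta_u(x_k^\pm(z))$, I must show
\[
  [T_i^\pm(z_1),\,(z_2-q^{\mp2}w)T_i^\pm(z_2)T_j^\pm(w)-(q^{\mp2}z_2-w)T_j^\pm(w)T_i^\pm(z_2)]
\]
vanishes in $\bar\U^{\wh\ot^2}((u))$. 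Each $T$ is a sum of two tensor-monomials, so after expansion there are $2^3\cdot 2=16$ terms. I would group them according to the ``type'' $n\in\{0,1,2,3\}$, where $n$ counts how many of the three slots (those carrying $z_1,z_2,w$) fall on the second tensor factor.

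Next I would normalize each term by using (Q3) and (Q4) to slide the Cartan currents $\phi_i^\pm$ past all $x$-currents within the same tensor slot; this multiplies the term by explicit rational factors in $z_1,z_2,w,u$. After this normalization, the two ``pure'' types $n=0$ and $n=3$ reduce, up to a scalar coefficient in $u$, to the (Q8) expression placed entirely in the first (respectively second) tensor slot, and hence vanish in $\bar\U^{\wh\ot^2}((u))$. The ``mixed'' types $n=1$ and $n=2$ are more delicate: each contains six tensor-monomials, and the strategy is to show that after the (Q3)--(Q4) normalization the rational prefactors combine so that the remaining sum is a linear combination, in the two tensor slots, of the two-variable expression
\[
  (z-q^{\mp2}w)x_a^\pm(z)x_b^\pm(w)-(q^{\mp2}z-w)x_b^\pm(w)x_a^\pm(z),\qquad a,b\in\{i,j\},
\]
commuted against one additional $x^\pm$ current. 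When $a=b$ the inner bracket reduces to $0$ by (Q6); when $a\neq b$, we recognize the commutator with the extra current as the (Q8) expression itself, and the remaining ``outer'' factor turns out to lie in the first or second tensor slot alone, producing a valid (Q8) instance in $\bar\U$.

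The main obstacle is the coefficient bookkeeping for the mixed types $n=1,2$: one must verify that the rational functions accumulated from the $\phi$-$x$ commutations (Q3), (Q4) together with the spectral parameters $u^{-1}q^{c_1}$ and $q^{c_2}$ introduced by (Co3), (Co4) fit together, up to the symmetries $z\leftrightarrow w$ and the expansion of the delta-like factors, into the precise shape of (Q8) in each slot. This is the same kind of calculation carried out in the proof of Lemma~\ref{lem:tri-tech3} (see also \eqref{newrela1}), and it is the reason why (Q8) was the exact relation needed to make $\wh\Delta_u$ descend. The ``$-$'' case of (Q8) is handled symmetrically, with (Co4) playing the role of (Co3).
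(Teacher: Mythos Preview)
Your overall plan (expand and group by tensor type) is sound, but the mechanism you invoke for the mixed types is wrong, and this is where the proof actually lives. In the mixed types only two $x$-currents sit in any one tensor slot, so you cannot produce a three-current (Q8) instance there; your sentence ``we recognize the commutator with the extra current as the (Q8) expression itself'' does not type-check, and (Q6) does not make the two-current expression vanish either.

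The paper's proof is shorter and shows what is really going on. One first simplifies the \emph{inner} two-current piece
\[
A_{ij}^\pm(z_2,w):=(z_2-q^{\mp 2}w)\wh\Delta_u(x_i^\pm(z_2))\wh\Delta_u(x_j^\pm(w))-(q^{\mp 2}z_2-w)\wh\Delta_u(x_j^\pm(w))\wh\Delta_u(x_i^\pm(z_2)).
\]
Using only (Q1) and (Q4), the four cross terms cancel \emph{exactly}, and $A_{ij}^\pm$ collapses to two tensor-pure terms:
\[
A_{ij}^+(z_2,w)=x_{ij}^+(z_2,w)\ot 1+q^{-c_1}u\,\phi_i^-\phi_j^-\ot x_{ij}^+(z_2u^{-1}q^{c_1},wu^{-1}q^{c_1}),
\]
where $x_{ij}^+(z,w)=(z-q^{-2}w)x_i^+(z)x_j^+(w)-(q^{-2}z-w)x_j^+(w)x_i^+(z)$. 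Commuting with $\wh\Delta_u(x_i^+(z_1))$ then gives four terms. Two of them are $[x_i^+,x_{ij}^+]$ placed in a single slot and vanish by (Q8). The other two are
\[
[\phi_i^-(z_1q^{c_1/2}),\,x_{ij}^+(z_2,w)]\ot x_i^+(\cdot)
\quad\text{and}\quad
[x_i^+(z_1),\,\phi_i^-\phi_j^-]\ot x_{ij}^+(\cdot,\cdot),
\]
and computing these $\phi$--$x$ commutators via (Q4) produces a rational prefactor
\[
\frac{(q^2-q^{-2})z_1\,(z_2-w)}{(w-q^{-2}z_1)(z_2-q^2z_1)}.
\]
The factor $(z_2-w)$ then annihilates the accompanying $x_{ij}^+(z_2,w)$ by (Q7). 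So the mixed terms die by (Q7), not by (Q6) or (Q8). This is exactly the coproduct analogue of Lemma~\ref{lem:tri-tech2} (equation~\eqref{eq:tri-tech2-temp2}), not of Lemma~\ref{lem:tri-tech3}/\eqref{newrela1}, which concern the Serre relation (Q9). If you rerun your brute-force expansion with this in mind, the bookkeeping closes; without (Q7) it will not.
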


\begin{proof}
Fix any $i\ne j\in\{0,1\}$ and denote by
 $I_{ij}^\pm$ the LHS of the relation (Q8).
We need to  show that $\wh\Delta_u(I_{ij}^\eta)=0$ with $\eta=\pm$. We will show
the case $\eta=+$, as the case $\eta=-$ is similar and thus omitted.
Set
\begin{align*}
  &x_{ij}^+(z,w)=(z-q^{-2}w)x_i^+(z)x_j^+(w)-(q^{-2}z-w)x_j^+(w)x_i^+(z),\\
  &A_{ij}^+(z,w)=(z-q^{-2}w)\wh\Delta_u(x_i^+(z))\wh\Delta_u(x_j^+(w))
  -(q^{-2}z-w)\wh\Delta_u(x_j^+(w))\wh\Delta_u(x_i^+(z)).
\end{align*}
A straightforward calculation shows that
\begin{align*}
  &A_{ij}^+(z,w)
  =x_{ij}^+(z,w)\ot 1
  +q^{-c_1}u\phi_i^-(zq^{\frac{c_1}{2}})\phi_j^-(wq^{\frac{c_1}{2}})
  \ot x_{ij}^+(zu\inv q^{c_1},wu\inv q^{c_1}).
\end{align*}
Using this, we obtain
\begin{align}
  &[\wh\Delta_u(x_i^+(z_1)),A_{ij}^+(z_2,w)]
  =[x_i^+(z_1),x_{ij}^+(z_2,w)]\ot 1\label{eq:coprod-Q9-temp1}\\
  +&[\phi_i^-(z_1q^{\frac{c_1}{2}}),x_{ij}^+(z_2,w)]\ot x_i^+(z_1u\inv q^{c_1})
  +q^{-c_1}u\phi_i^-(z_1q^{\frac{c_1}{2}})\phi_i^-(z_2q^{\frac{c_1}{2}})
  \phi_j^-(wq^{\frac{c_1}{2}})\nonumber\\
  &\ot[x_i^+(z_1u\inv q^{c_1}),x_{ij}^+(z_2u\inv q^{c_1},wu\inv q^{c_1})]\nonumber\\
  &+q^{-c_1}u[x_i^+(z_1),\phi_i^-(z_2q^{\frac{c_1}{2}})
    \phi_j^-(wq^{\frac{c_1}{2}})]
  \ot x_{ij}^+(z_2u\inv q^{c_1},wu\inv q^{c_1})\nonumber
\end{align}
By applying (Q4) one gets that
\begin{align}
  &[\phi_i^-(z_1q^{\frac{c}{2}}),x_{ij}^+(z_2,w)]\nonumber\\
  =&\phi_i^-(z_1q^{\frac{c}{2}})x_{ij}^+(z_2,w)
  \(1- g_{ji}(z_1/w)g_{ii}(z_1/z_2) \)\nonumber\\
  =&\frac{(q^2-q^{-2})z_1(z_2-w)}{(w-q^{-2}z_1)(z_2-q^2z_1)}
  \phi_i^-(z_1q^{\frac{c}{2}})x_{ij}^+(z_2,w),\label{eq:coprod-Q9-temp2}\\
  &[x_i^+(z_1),\phi_i^-(z_2q^{\frac{c}{2}})
    \phi_j^-(wq^{\frac{c}{2}})]\nonumber\\
  =&\phi_i^-(z_2q^{\frac{c}{2}})
    \phi_j^-(wq^{\frac{c}{2}})x_i^+(z_1)
    \(g_{ji}(z_1/w)g_{ii}(z_1/z_2)-1\)\nonumber\\
  =&-\frac{(q^2-q^{-2})z_1(z_2-w)}{(w-q^{-2}z_1)(z_2-q^2z_1)}
  \phi_i^-(z_2q^{\frac{c}{2}})
    \phi_j^-(wq^{\frac{c}{2}})x_i^+(z_1).\label{eq:coprod-Q9-temp3}
\end{align}
Recall from (Q7) that
\begin{align}\label{eq:coprod-Q9-temp4}
  (z-w)x_{ij}^+(z,w)=0=(z-w)x_{ij}^+(zu\inv q^{c_1},wu\inv q^{c_1}).
\end{align}
Combining  \eqref{eq:coprod-Q9-temp1}, \eqref{eq:coprod-Q9-temp2}, \eqref{eq:coprod-Q9-temp3} and \eqref{eq:coprod-Q9-temp4}, we deduce from (Q8) that
\begin{align*}
  [\wh\Delta_u(x_i^+(z_1)),A_{ij}^+(z_2,w)]=0.
\end{align*}
This implies that $\wh\Delta_u(I_{ij}^+)=0$, as required.
\end{proof}

To continue the discussion, we need to introduce some notations.
For  $0\le s\le 3$, we denote by $S_{3,s}$ the set of $(s,3-s)$-shuffles in $S_3$, that is
\begin{align*}
  S_{3,s}=\set{\sigma\in S_3}{\sigma(a)<\sigma(b),\,\te{for }a<b\le s\,\,\te{or}\,\,s<a<b}.
\end{align*}
As a convention, we let $\sigma(a)=a$ for any $a<1$ or $a>3$.
For $0\le s\le 3$ and $\sigma\in S_{3,s}$, we define two partitions
\begin{align*}
P_{0,\sigma}^1\cup \cdots \cup P_{3-s,\sigma}^1\quad\te{and}\quad
P_{0,\sigma}^1\cup \cdots \cup P_{s+1,\sigma}^1
\end{align*}
 of the set $\{0,1,2,3\}$,
where
\begin{align*}
  &P_{0,\sigma}^1=\set{p\in\Z}{0\le p<\sigma(s+1)},\\
  &P_{k,\sigma}^1=\set{p\in\Z}{\sigma(s+k)\le p<\sigma(s+k+1)}\quad\te{for } 0<k<3-s,\\
  &P_{3-s,\sigma}^1=\set{p\in\Z}{\sigma(3)\le p\le 3}\quad\te{if }s<3,\\
  &P_{0,\sigma}^2=\set{p\in\Z}{0\le p<\sigma(1)}\quad\te{if }s>0,\\
  &P_{k,\sigma}^2=\set{p\in\Z}{\sigma(k)\le p<\sigma(k+1)}\quad\te{for } 0<k<s,\\
  &P_{s,\sigma}^2=\set{p\in\Z}{\sigma(s)\le p\le 3}.
\end{align*}
Furthermore, for $0\le s\le 3$ and $0\le k\le 3-s$, set
\begin{align*} &T_{s,k}^1(z_1,z_2,z_3,w)=
  \sum_{\sigma\in S_{3,s}}\sum_{r\in P_{k,\sigma}^1}\binom{3}{r}_q(-1)^{r}
  \prod_{\substack{a\le s<b, \sigma(a)<\sigma(b)}}(q^2z_{a}-z_{b})\\
  &\cdot \prod_{\substack{a\le s<b, \sigma(a)>\sigma(b)}}(z_{a}-q^2z_{b})
  \prod_{\substack{a\le s, \sigma(a)\le r}}(q^{-2}z_{a}-w)
  \prod_{\substack{a\le s, \sigma(a)> r}}(z_{a}-q^{-2}w),
\end{align*}
and for $0\le s\le 3$ and $0\le k\le s$, set
\begin{align*}
  &T_{s,k}^2(z_1,z_2,z_3,w)=
  \sum_{\sigma\in S_{3,s}}\sum_{r\in P_{k,\sigma}^2}\binom{3}{r}_q(-1)^{r}
  \prod_{\substack{a\le s<b, \sigma(a)<\sigma(b)}}(q^2z_{a}-z_{b})\\
  &\cdot \prod_{\substack{a\le s<b, \sigma(a)>\sigma(b)}}(z_{a}-q^2z_{b})
\prod_{\substack{a> s, \sigma(a)>r}}(q^{-2}w-z_{a})
  \prod_{\substack{a> s, \sigma(a)\le r}}(w-q^{-2}z_{a}).
\end{align*}
We have:
\begin{lem}\label{lem:coprod-Q10-2}
\textrm{(1)} For  $0\le k\le 3$, one has that
  $$T_{0,k}^1(z_1,z_2,z_3,w)=\binom{3}{k}_q(-1)^k.$$

\textrm{(2)}
There exist  polynomials $f_{1,0}(z_1,z_2),f_{1,1}(z_1,z_2),f_{1,2}(z_1,z_2)\in\C[[\hbar]][z_1,z_2]$  such that
\begin{align*}
  &T_{1,0}^1(z_1,z_2,z_3,w)
  =f_{1,0}(z_1,w)(z_2-q^2 z_3)-f_{1,1}(z_1,z_3)(q^{-2}z_2-w),\\
  &T_{1,1}^1(z_1,z_2,z_3,w)
  =f_{1,1}(z_1,z_3)(z_2-q^{-2}w)+f_{1,1}(z_1,z_2)(q^{-2}z_3-w),\\
  &T_{1,2}^1(z_1,z_2,z_3,w)
  =-f_{1,1}(z_1,z_2)(z_3-q^{-2}w)+f_{1,2}(z_1,w)(z_2-q^2z_3).
\end{align*}
.

\textrm{(3)} There exist polynomials  $$f_{2,0}(z_1,z_2,z_3,w),f_{2,1}(z_1,z_2),f_{2,2}(z_1,z_2,z_3,w)\in\C[[\hbar]][z_1,z_2,z_3,w]$$
such that  $$f_{2,k}(z_1,z_2,z_3,w)=f_{2,k}(z_2,z_1,z_3,w),\quad k=0,2,$$
and
\begin{align*}
  &T_{2,0}^1(z_1,z_2,z_3,w)=f_{2,0}(z_1,z_2,z_3,w)(z_1-q^2 z_2)
  +f_{2,1}(z_1,z_2)(z_3-z_2)(q^{-2}z_3-w),\\
  &T_{2,1}^1(z_1,z_2,z_3,w)=f_{2,2}(z_1,z_2,z_3,w)(z_1-q^2z_2)
  -f_{2,1}(z_1,z_2)(z_3-z_2)(z_3-q^{-2}w).
\end{align*}

\textrm{(4)} There exist polynomials $f_{3,0}(z_1,z_2),f_{3,1}(z_1,z_2)\in\C[[\hbar]][z_1,z_2]$ such that
\begin{align*}
  &T_{3,0}^1(z_1,z_2,z_3,w)=f_{3,0}(z_3,w)(z_1-q^2z_2)+f_{3,1}(z_1,w)(z_2-q^2w).
\end{align*}

\textrm{(5)} $T_{s,k}^2(z_1,z_2,z_3,w)=(-1)^{s}T_{3-s,k}^1(z_3,z_1,z_2,w)$ if $s<3$.

\textrm{(6)}
$T_{3,0}^1(z_1,z_2,z_3,w)=-T_{0,0}^2(z_1,z_2,z_3,w)$.
\end{lem}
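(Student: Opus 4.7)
The statement is a collection of purely polynomial identities, so the plan is to verify each part by unpacking the definitions of $T_{s,k}^1$ and $T_{s,k}^2$ and reorganizing the resulting sums. Part (1) will be essentially immediate: when $s=0$ the set $S_{3,0}$ consists only of the identity, the partition block $P_{k,\mathrm{id}}^1$ reduces to the singleton $\{k\}$, and the products indexed by $a\le s=0$ are empty, so $T_{0,k}^1$ collapses to $\binom{3}{k}_q(-1)^k$.

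For parts (2), (3) and (4) I intend to proceed by direct expansion: each $T_{s,k}^1$ is a finite sum over the $\binom{3}{s}$ shuffles in $S_{3,s}$ and the values $r\in P_{k,\sigma}^1$. For each shuffle one reads off the factors of the form $(q^2 z_a - z_b)$, $(z_a - q^2 z_b)$, $(q^{-2}z_a - w)$ and $(z_a - q^{-2}w)$. Since the degree in each variable is bounded, the resulting polynomial can be completely written out and rearranged to display the claimed factorizations; the polynomials $f_{s,k}$ are then defined as the corresponding cofactors. The pairing of terms is guided by the shape of relations (Q6)--(Q8) in $\bar\U$: every monomial in $T_{s,k}^1$ should attach either to $(z_a - q^{\pm 2}z_b)$, to $(z_b - z_c)(z_b - q^{\mp 2}w)$, or to the bracket expression appearing in (Q8), since these are exactly the factors that will be needed to eliminate the corresponding commutators when proving that $\bar\Delta_u$ preserves (Q9). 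The symmetry $f_{2,k}(z_1,z_2,z_3,w)=f_{2,k}(z_2,z_1,z_3,w)$ for $k=0,2$ is forced once one observes that the only asymmetric factor of $T_{2,k}^1$ in the variables $z_1,z_2$ is $(z_1-q^2 z_2)$, while the remaining shuffle sum is visibly invariant under swapping them.

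For part (5) the plan is to use the natural involution on $\{1,2,3\}$ swapping the blocks $\{1,\ldots,s\}$ and $\{s+1,\ldots,3\}$; this gives an order-preserving bijection $S_{3,s}\to S_{3,3-s}$, $\sigma\mapsto\sigma'$. Under this bijection the partitions $P^2_{k,\sigma}$ and $P^1_{k,\sigma'}$ of $\{0,\ldots,3\}$ coincide, each factor $(q^{-2}w-z_a)$ in $T^2_{s,k}$ becomes $-(z_a-q^{-2}w)$ in $T^1_{3-s,k}$ contributing the overall sign $(-1)^s$, and a relabeling of the external variables accounts for the permutation $(z_3,z_1,z_2)$ on the right-hand side. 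Part (6) will be the special case of the same bijection between $S_{3,3}$ and $S_{3,0}$, for which both sets are singletons and the sign is $(-1)^3=-1$.

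The main obstacle will be the combinatorial bookkeeping in parts (2) and (3): for $s=1$ and $s=2$ there are three shuffles and up to four choices of $r$, producing roughly a dozen monomials each with four factors. Correctly grouping these so that the expected $(z_a-q^{\pm 2}z_b)$ factors emerge and the remaining cofactors match across the three parts of the identity requires careful management, and the $q$-binomial identities of the form $\binom{3}{r}_q-q^{\pm 2}\binom{3}{r-1}_q$ will be used repeatedly to consolidate coefficients. Once this bookkeeping is done, the factorizations and the symmetries stated in the lemma fall out immediately.
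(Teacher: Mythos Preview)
Your plan is essentially the paper's: the authors also prove the lemma by brute-force expansion, writing down each $T_{s,k}^1$ and $T_{s,k}^2$ explicitly as a polynomial in $z_1,z_2,z_3,w$ and then reading off the stated factorizations and symmetries. The only minor difference is in parts (5)--(6): you sketch a bijective argument via the block-swapping involution $S_{3,s}\to S_{3,3-s}$, whereas the paper simply verifies these identities by comparing the explicit polynomials already computed for the $T^1$'s and the $T^2$'s. Your bijection is the conceptual explanation behind the paper's tabulated equalities, but be careful with the sign bookkeeping: both the $w$-factors and the $(q^2z_a-z_b)$/$(z_a-q^2z_b)$ factors flip under the block swap, and the conditions $\sigma(a)\lessgtr\sigma(b)$, $\sigma(a)\lessgtr r$ must be tracked through the relabeling to recover the precise sign $(-1)^s$ and the cyclic shift $(z_3,z_1,z_2)$.
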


\begin{proof}
The lemma follows from the following straightforward facts:
\begin{align*}
  &T_{0,k}^1(z_1,z_2,z_3,w)=T_{3,k}^2(z_1,z_2,z_3,w)=\binom{3}{k}_q(-1)^k,\\
  &T_{1,0}^1(z_1,z_2,z_3,w)=-T_{2,0}^2(z_2,z_3,z_1,w)\\
  &\quad=(q^{4}-1)z_1\big((q^{-4}z_1-w)(z_2-q^2z_3)
    -[3]_{q^2}(z_1-z_3)(q^{-2}z_2-w)\big),\\
  &T_{1,1}^1(z_1,z_2,z_3,w)=-T_{2,1}^2(z_2,z_3,z_1,w)\\
  &\quad=(q^4-1)z_1[3]_{q^2}\big((z_1-z_3)(z_2-q^{-2}w)
  +(z_1-z_2)(q^{-2}z_3-w)\big),\\
  &T_{1,2}^1(z_1,z_2,z_3,w)=-T_{2,2}^2(z_2,z_3,z_1,w)\\
  &\quad=(q^4-1)z_1\big(
    -[3]_{q^2}(z_1-z_2)(z_3-q^{-2}w)
    +(z_1-q^{-4}w)(z_2-q^2z_3)
  \big),\\
  &T_{2,0}^1(z_1,z_2,z_3,w)=T_{1,0}^2(z_2,z_3,z_1,w)\\
  &\quad=z_1z_2(1-q^{-2})(q^4-1)[3]_{q^2}(z_3-w)(q^{-2}z_3-w)
  +(1-q^{-4})(z_1-q^2z_2)\\
  &\quad\times\big(q^{2}[3]_{q^2}(z_3-w)(q^4z_1z_2-z_3w)
  -z_3(1+q^{-2})(q^2z_1-z_3)(q^2z_2-z_3)\big),\\
  &T_{2,1}^1(z_1,z_2,z_3,w)=T_{1,1}^2(z_2,z_3,z_1,w)\\
  &\quad=-z_1z_2(1-q^{-2})(q^4-1)[3]_{q^2}(z_3-w)(z_3-q^{-2}w)
  -(1-q^{-4})(z_1-q^2z_2)\\
  &\quad\times\big(q^{2}[3]_{q^2}(z_3-w)(z_1z_2-q^4z_3w)
  -z_3(1+q^{-2})(z_1-q^2z_3)(z_2-q^2z_3)\big),\\
  &T_{3,0}^1(z_1,z_2,z_3,w)=-T_{0,0}^2(z_1,z_2,z_3,w)\\
  &\quad=w(1-q^{-4})\big(-(z_3-q^{-4}w)
 (z_1-q^2 z_2)+(q^{-4}z_1-w)(z_2-q^2z_3)\big).
\end{align*}
\end{proof}

 For $i\ne j\in \{0,1\}$, denote by $J_{ij}^\pm$  the
LHS of (Q9).
\begin{lem}\label{lem:coprod-Q10-1}
 For $i\ne j\in \{0,1\}$, we have
\begin{align*}
  \bar\Delta_u(J_{ij}^+)
  =&\sum_{\tau\in S_3}\sum_{s=0}^3\sum_{k=0}^{3-s}
  \frac{T_{s,k}^1(z_{\tau(1)},z_{\tau(2)},z_{\tau(3)},w)}
  {\prod_{a\le s<b}(z_{\tau(a)}-q^2z_{\tau(b)})
  \prod_{a\le s}(z_{\tau(a)}-q^{-2}w)}\\
  \cdot&
  \wt\phi_i^-(z_{\tau(s+1)})\cdots \wt\phi_i^-(z_{\tau(3)})\wt\phi_j^-(w)
  x_i^+(z_{\tau(1)})\cdots x_i^+(z_{\tau(s)})\\
  &\ot
  \wt x_i^+(z_{\tau(s+1)})\cdots \wt x_i^+(z_{\tau(k)})\wt x_j^+(w)
  \wt x_i^+(z_{\tau(k+1)})\cdots \wt x_i^+(z_{\tau(3)})\\
  +\sum_{\tau\in S_3}&\sum_{s=0}^3\sum_{k=0}^{s}
  \frac{T_{s,k}^2(z_{\tau(1)},z_{\tau(2)},z_{\tau(3)},w)}
    {\prod_{a\le s<b}(z_{\tau(a)}-q^2z_{\tau(b)})
    \prod_{a> s}(w-q^{-2}z_{\tau(a)})}\\
  \cdot&
  \wt\phi_i^-(z_{\tau(s+1)})\cdots \wt\phi_i^-(z_{\tau(3)})
  x_i^+(z_{\tau(1)})\cdots x_i^+(z_{\tau(k)})x_j^+(w)\\
  \cdot&
  x_i^+(z_{\tau(k+1)})
  \cdots x_i^+(z_{\tau(s)})
  \ot \wt x_i^+(z_{\tau(s+1)})\cdots \wt x_i^+(z_{\tau(3)}),
\end{align*}
where $\wt\phi_k^-(z)=\phi_k^-(zq^{\frac{c_1}{2}})$ and $\wt x_k^+(z)=x_k^+(zu\inv q^{c_1})$ for $k=i,j$.
\end{lem}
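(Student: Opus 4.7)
The plan is to establish the formula by a direct computation: first expand $\bar\Delta_u$ on every current appearing in $J^+_{ij}$, then normalize each resulting elementary tensor, and finally reindex the total sum to match the claimed form. Since (Co3) gives $\bar\Delta_u(x^+_k(z)) = x^+_k(z)\otimes 1 + \wt\phi^-_k(z)\otimes \wt x^+_k(z)$ for $k = i,j$, each summand $x^+_i(z_{\sigma(1)})\cdots x^+_i(z_{\sigma(r)})x^+_j(w)x^+_i(z_{\sigma(r+1)})\cdots x^+_i(z_{\sigma(3)})$ of $J^+_{ij}$ yields $2^4 = 16$ elementary tensors after expansion, indexed by a pair $(L,\epsilon)$, where $L \subseteq \{1,2,3\}$ records the $x^+_i$-positions that chose the pure branch and $\epsilon \in \{0,1\}$ records whether $x^+_j(w)$ chose the mixed branch $(\epsilon=1)$ or the pure branch $(\epsilon=0)$.

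For each such elementary tensor I would bring the first tensor factor into the canonical form required by the statement -- all $\wt\phi^-$-currents on the left, followed by the $x^+$-currents in their inherited order -- by pushing each $\wt\phi^-$ past the $x^+$-currents to its right using (Q3)--(Q4). Each commutation produces a rational $g_{ab}$-prefactor. The $\wt\phi^-$'s mutually commute by (Q1), so any residual reordering among them is free. On the second tensor factor no further reordering is necessary: the $\wt x^+$'s inherit the $\sigma$-ordering of the original Serre term, matching the order shown in the lemma.

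I would then reindex the combined sum by the tuple $(\tau, s, \sigma, k, \epsilon)$, where $\tau \in S_3$ is the final ordering of the three $z$-labels after normalization, $s = |L|$ counts the number of $x^+_i$'s that remained on the left factor, $\sigma \in S_{3,s}$ is the shuffle $\pi^{-1}\tau$ recording the interleaving of left-stayers and right-movers in the $\tau$-ordered Serre term (here $\pi$ denotes the outer permutation of the Serre sum), and $k$ records the block of the $\sigma$-induced partition of $\{0,1,2,3\}$ containing the original Serre index $r$. With this reindexing, $r$ ranges over $P^1_{k,\sigma}$ when $\epsilon=1$ and over $P^2_{k,\sigma}$ when $\epsilon=0$. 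Extracting the common denominators $\prod_{a\leq s<b}(z_{\tau(a)}-q^2z_{\tau(b)})\prod_{a\leq s}(z_{\tau(a)}-q^{-2}w)$ and $\prod_{a\leq s<b}(z_{\tau(a)}-q^2z_{\tau(b)})\prod_{a>s}(w-q^{-2}z_{\tau(a)})$ from the two groups of contributions, one obtains exactly the polynomials $T^1_{s,k}$ and $T^2_{s,k}$ in the remaining numerators.

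The main obstacle is this last step: verifying that the extracted common denominator is indeed the asserted one, and that the sum of numerators over the partition block $P^\eta_{k,\sigma}$ matches the defining expression for $T^\eta_{s,k}$. This requires careful tracking of how the $g_{ii}$-, $g_{ij}$- and $g_{ji}$-factors distribute across the 16-term expansion, and how the monomials $(q^{\pm 2}z_a-z_b)$, $(z_a-q^{\pm 2}z_b)$, $(q^{-2}w-z_a)$, $(w-q^{-2}z_a)$ arising from (Q3)--(Q4) assemble into the products indexed by the sets $\{a\leq s<b,\sigma(a)<\sigma(b)\}$, $\{a\leq s<b,\sigma(a)>\sigma(b)\}$, $\{a\leq s,\sigma(a)\leq r\}$ and $\{a\leq s,\sigma(a)>r\}$ that appear in the definition of $T^\eta_{s,k}$. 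Once this bookkeeping is in place, no further identity beyond (Q1)--(Q7) is needed and the formula drops out.
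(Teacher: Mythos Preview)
Your proposal is correct and follows essentially the same route as the paper: expand $\bar\Delta_u$ on each current via (Co3), push the $\wt\phi_i^-$'s to the left using (Q4) to pick up the $g$-factors, and reindex so that the resulting numerators assemble into $T^1_{s,k}$ and $T^2_{s,k}$. The paper packages the normal-ordering step and the reindexing facts a bit differently (defining auxiliary symbols $\xi_\sigma$, $\eta_{r,\sigma}$, $\wt\eta_{r,\sigma}$ and an ordered product $\:\cdot\;$), but the substance is identical and the final ``direct calculation'' they invoke is exactly the bookkeeping you describe.
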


\begin{proof}
For $\sigma\in S_{3,s}$, we define
\begin{align*}
  \xi_\sigma(z_a)=\begin{cases}
           x_i^+(z_a),&\te{if }\sigma\inv(a)\le s,\\
           \wt\phi_i^-(z_a),&\te{if }\sigma\inv(a)>s,
         \end{cases},
  \quad
  \wt\xi_\sigma(z_a)=\begin{cases}
                \wt x_i^+(z_a),&\te{if }\sigma\inv(a)>s,\\
                1,&\te{if }\sigma\inv(a)\le s.
              \end{cases}
\end{align*}
For $0\le r,s\le 3$ and $\sigma\in S_{3,s}$, set
\begin{align*}
  \xi_{r,\sigma}(z_1,z_2,z_3,w)=&\xi_\sigma(z_1)\cdots \xi_\sigma(z_r)\phi_j^-(wq^{\frac{c}{2}})\xi_\sigma(z_{r+1})
  \cdots \xi(z_3),\\
  \eta_{r,\sigma}(z_1,z_2,z_3,w)=&\xi_\sigma(z_1)\cdots \xi_\sigma(z_r)x_j^+(w)\xi_\sigma(z_{r+1})
  \cdots \xi_\sigma(z_3),\\
  \wt \xi_\sigma(z_1,z_2,z_3)=&\wt\xi_\sigma(z_1)\wt\xi_\sigma(z_2)\wt\xi_\sigma(z_3),\\
  \wt \eta_{r,\sigma}(z_1,z_2,z_3,w)=&\wt\xi_\sigma(z_1)\cdots \wt\xi_\sigma(z_r)x_j^+(wu\inv q^{c_1})\wt\xi_\sigma(z_{r+1})
  \cdots \wt\xi_\sigma(z_3).
\end{align*}
And for $i_1,i_2,\dots,i_n\in \{0,1\}$, $\zeta_{i_r}(z_r)=x_{i_r}^+(z_r)$ or $\phi_{i_r}^-(z_rq^{\frac{c}{2}})$, we define an ordered product
\begin{align*}
  \:\zeta_{i_1}(z_1)\cdots \zeta_{i_n}(z_n)\;
\end{align*}
by moving $\phi_{i_r}^-(z_rq^{\frac{c}{2}})$ to the left.
Then it follows from (Q4) that
\begin{align*}
  &\xi_{r,\sigma}(z_1,z_2,z_3,w)=\prod_{\substack{ a\le s<b}}
  g_{ii}(z_{\sigma(b)}/z_{\sigma(a)})
  \prod_{\substack{a\le s\\ \sigma(a)\le r}}g_{ij}(w/z_{\sigma(a)})
  \:\xi_{r,\sigma}(z_1,z_2,z_3,w)\;,\\
  &\eta_{r,\sigma}(z_1,z_2,z_3,w)=\prod_{\substack{ a\le s<b}}
  g_{ii}(z_{\sigma(b)}/z_{\sigma(a)})
  \prod_{\substack{a\le s\\ \sigma(a)> r}}g_{ji}(z_{\sigma(a)}/w)
  \:\eta_{r,\sigma}(z_1,z_2,z_3,w)\;.
\end{align*}
Moreover, it is straightforward to see that
\begin{align*}
  &\Delta_u(x_i^+(z_1))\cdots\Delta_u(x_i^+(z_r))\Delta_u(x_j^+(w))
  \Delta_u(x_i^+(z_{r+1}))\cdots\Delta_u(x_i^+(z_3))\\
  =&\sum_{s=0}^3\sum_{\sigma\in S_{3,s}}
  \prod_{\substack{ a\le s<b}}
  g_{ii}(z_{\sigma(b)}/z_{\sigma(a)})
  \prod_{\substack{a\le s\\ \sigma(a)\le r}}g_{ij}(w/z_{\sigma(a)})
  \:\xi_{r,\sigma}(z_1,z_2,z_3,w)\;\ot\wt\eta_{r,\sigma}(z_1,z_2,z_3,w)\\
  &+\sum_{s=0}^3\sum_{\sigma\in S_{3,s}}
  \prod_{\substack{ a\le s<b}}
  g_{ii}(z_{\sigma(b)}/z_{\sigma(a)})
  \prod_{\substack{a\le s\\ \sigma(a)> r}}g_{ji}(z_{\sigma(a)}/w)
  \:\eta_{r,\sigma}(z_1,z_2,z_3,w)\;\ot\wt\xi_\sigma(z_1,z_2,z_3).
\end{align*}

Now the lemma follows from a direct calculation and the following facts:
\begin{align*}
  &\:\xi_{r_1,\sigma}(z_{\tau(1)},z_{\tau(2)},z_{\tau(3)},w)\;\ot \wt\eta_{r_1,\sigma}(z_{\tau(1)},z_{\tau(2)},z_{\tau(3)},w)\\
  =&\:\xi_{0,1}(z_{\tau\sigma(1)},z_{\tau\sigma(2)},z_{\tau\sigma(3)},w)\;\ot \wt\eta_{p_{k,\sigma}^1,1}(z_{\tau\sigma(1)},z_{\tau\sigma(2)},z_{\tau\sigma(3)},w),\\
  &\:\eta_{r_2,\sigma}(z_{\tau(1)},z_{\tau(2)},z_{\tau(3)},w)\;\ot \wt\xi_{\sigma}(z_{\tau(1)},z_{\tau(2)},z_{\tau(3)},w)\\
  =&\:\eta_{p_{k,\sigma}^2,1}(z_{\tau\sigma(1)},z_{\tau\sigma(2)},z_{\tau\sigma(3)},w)\;\ot \wt\xi_{1}(z_{\tau\sigma(1)},z_{\tau\sigma(2)},z_{\tau\sigma(3)},w),
\end{align*}
where $\tau\in S_3$, $\sigma\in S_{3,s}$ ($0\le s\le 3$), $r_a\in P_{k,\sigma}^a$ ($a=1,2$) and
$p_{k,\sigma}^a$ the minimal element in $P_{k,\sigma}^a$ ($a=1,2$).

\end{proof}

\textbf{Proof of Theorem \ref{thm:bialg-struct}}:
In view of Lemmas \ref{lem:coprod-Q1-Q8} and \ref{lem:coprod-Q9}, it suffices to show that
\begin{align*}
  \bar\Delta_u(J_{ij}^\pm)=0,\quad \te{for }i\ne j\in\{0,1\}.
\end{align*}
Let  $i\ne j\in\{0,1\}$ be fixed.
From Lemma \ref{lem:coprod-Q10-1} and (1), (5) of Lemma \ref{lem:coprod-Q10-2}, it follows  that
\begin{align*}
  \bar\Delta_u(J_{ij}^+)
  =&\sum_{\tau\in S_3}\sum_{s=1}^3\sum_{k=0}^{3-s}
  \frac{T_{s,k}^1(z_{\tau(1)},z_{\tau(2)},z_{\tau(3)},w)}
  {\prod_{a\le s<b}(z_{\tau(a)}-q^2z_{\tau(b)})
  \prod_{a\le s}(z_{\tau(a)}-q^{-2}w)}\\
  \cdot&
  \wt\phi_i^-(z_{\tau(s+1)})\cdots \wt\phi_i^-(z_{\tau(3)})\wt\phi_j^-(w)
  x_i^+(z_{\tau(1)})\cdots x_i^+(z_{\tau(s)})\\
  &\ot
  \wt x_i^+(z_{\tau(s+1)})\cdots \wt x_i^+(z_{\tau(k)})\wt x_j^+(w)
  \wt x_i^+(z_{\tau(k+1)})\cdots \wt x_i^+(z_{\tau(3)})\\
  +\sum_{\tau\in S_3}&\sum_{s=0}^2\sum_{k=0}^{s}
  \frac{T_{s,k}^2(z_{\tau(1)},z_{\tau(2)},z_{\tau(3)},w)}
    {\prod_{a\le s<b}(z_{\tau(a)}-q^2z_{\tau(b)})
    \prod_{a> s}(w-q^{-2}z_{\tau(a)})}\\
  \cdot&
  \wt\phi_i^-(z_{\tau(s+1)})\cdots \wt\phi_i^-(z_{\tau(3)})
  x_i^+(z_{\tau(1)})\cdots x_i^+(z_{\tau(k)})x_j^+(w)\\
  \cdot&
  x_i^+(z_{\tau(k+1)})
  \cdots x_i^+(z_{\tau(s)})
  \ot \wt x_i^+(z_{\tau(s+1)})\cdots \wt x_i^+(z_{\tau(3)}).
\end{align*}
Combining this with Lemma \ref{lem:coprod-Q10-2} and (Q6-Q7), a similar argument of Lemma \ref{lem:tri-tech3} shows that
\begin{equation}\begin{split}\label{newrela2}
  \bar\Delta_u(J_{ij}^+)
  &=\sum_{\tau\in S_3}
  \frac{f_{1,1}(z_{\tau(1)},z_{\tau(3)})}
  {\prod_{a\le s<b}(z_{\tau(a)}-q^2z_{\tau(b)})
  \prod_{a\le s}(z_{\tau(a)}-q^{-2}w)}\wt\phi_i^-(z_{\tau(2)})\wt\phi_i^-(z_{\tau(3)})\wt\phi_j^-(w)\\
  \cdot&
  x_i^+(z_{\tau(1)})
  \ot\big[(z_{\tau(2)}-q^{-2}w)\wt x_i^+(z_{\tau(2)})\wt x_j^+(w)
    -(q^{-2}z_{\tau(2)}-w)\wt x_j^+(w)
  \wt x_i^+(z_{\tau(2)}),\wt x_i^+(z_{\tau(3)})\big]\\
  +\sum_{\tau\in S_3}&
  \frac{f_{1,1}(z_{\tau(3)},z_{\tau(2)})}
    {\prod_{a\le s<b}(z_{\tau(a)}-q^2z_{\tau(b)})
    \prod_{a> s}(w-q^{-2}z_{\tau(a)})}\wt\phi_i^-(z_{\tau(3)})\\
  \cdot&
  \big[(z_{\tau(1)}-q^{-2}w)x_i^+(z_{\tau(1)})x_j^+(w)
  -(q^{-2}z_{\tau(1)}-w)
  x_j^+(w)x_i^+(z_{\tau(1)})
  , x_i^+(z_{\tau(2)})\big]
  \ot \wt x_i^+(z_{\tau(3)}).
\end{split}\end{equation}
Then it follows from (Q8) that $\bar\Delta_u(J_{ij}^+)=0$.
The proof of $\bar\Delta_u(J_{ij}^-)=0$ is similar.
Therefore, we complete the proof of Theorem \ref{thm:bialg-struct}.

\begin{rem}{\em Let $\wt{\U}_\hbar(\dot\g_{tor})$ be the quotient algebra of $\wt{\U}$ modulo the relation \eqref{strongerrela}. 
It was shown in \cite[Proposition 29]{He-representation-coprod-proof} that the action (Co1-Co4) defines a unique 
algebra homomorphism $\wt\Delta_u:\wt\U_\hbar(\dot\g_{tor})\rightarrow \big(\wt\U_\hbar(\dot\g_{tor})^{\wh\ot^2}\big) ((u))$.
Recall that $\U_\hbar(\dot\g_{tor})$ is the quotient algebra of $\wt{\U}_\hbar(\dot\g_{tor})$ modulo the relation (Q9).
 As the relations (Q6-Q8) also hold in  $\U_\hbar(\dot\g_{tor})$,
it follows from (the proof of) Theorem \ref{thm:bialg-struct}  that $\wt\Delta_u$ is compatible with the affine quantum Serre relation (Q9).
This implies that $\wt\Delta_u$ induces an algebra homomorphism from
$\U_\hbar(\dot\g_{tor})$ to $\big(\U_\hbar(\dot\g_{tor})^{\wh\ot^2}\big) ((u))$, as expected in \cite[Remark 6]{He-representation-coprod-proof}.
}\end{rem}

\section{Vertex representation of $\U$}

In this section we show the  vertex representation for $\U_\hbar(\hat\g)$ given in \cite{J-KM} induces a representation for $\U$.

We first recall the vertex representation given in \cite{J-KM}.
Let
\begin{align}
  \symalg=\C[h_{i,n}\mid i=0,1,n<0][[\hbar]]
\end{align}
be the symmetric $\C[[\hbar]]$-algebra in the variable $h_{i,n}$, which is topologically free as a $\C[[\hbar]]$-module.
It is known that there is an $\mathcal H$-action on $\symalg$ such that  $c=1$, $h_{i,0}=0$, $h_{i,-n}=$ multiplication operator and
$h_{i,n}=$ annihilation operator subject to the relation
\begin{eqnarray*}
[h_{i,m},h_{j,-n}]=\frac{\delta_{m+n,0}}{n}[n a_{ij}]_q\frac{q^{nc}-q^{-nc}}{q-q\inv},
\end{eqnarray*}
where $i=0,1$ and $n>0$.
From now on, we take $c$ and $h_{i,n}$ ( $i=0,1$, $n\in\Z$)
as operators on $\symalg$.
For $i=0,1$, we introduce the following fields on $\symalg$
\begin{align*}
&h_i^\pm(z)=\sum\limits_{\pm m>0}h_{i,m}z^{-m},\quad
E_i^\pm(z)=\te{exp}\left(\mp\sum\limits_{m>0}
    \frac{h_{i,\pm m}}{[m]_q}z^{\mp m}\right).
\end{align*}

Let $Q=\Z\al_0\oplus\Z\al_1$ be a rank $2$  lattice equipped with a semi-positive form $\<\cdot,\cdot\>$ determined by
$\<\al_i,\al_j\>=a_{ij}$ for $i,j=0,1$.
We fix a $2$-cocycle $\varepsilon:Q\times Q\to \{\pm 1\}$ such that
\begin{align*}
  &\varepsilon(\al,\be)\varepsilon(\be,\al)\inv=C(\al,\be),\quad
  \varepsilon(\al,0)=1=\varepsilon(0,\al),
\end{align*}
where
\begin{eqnarray*}\label{eq:group-commutator}
C: Q\times Q\rightarrow \{\pm 1\},\quad (\al,\be)\mapsto (-1)^{\<\al|\be\>}.
\end{eqnarray*}
Denote by $\C_\varepsilon[Q]$ the $\varepsilon$-twisted group $\C$-algebra of $Q$,
which by definition has a designated basis $\{e_\al\mid \al\in Q\}$
such that $e_\al\cdot e_\be=\varepsilon(\al,\be)e_{\al+\be}$ for $\al,\be\in Q$.
Then $\C_\hbar\{Q\}=\C_\varepsilon [Q][[\hbar]]$ becomes an $\mathcal H$-module under the action that
 $c$ and $h_{i,n}$ act trivially for $i=0,1$, $n\ne 0$, and $h_{i,0}$ ($i=0,1$) act by
\begin{align*}
  h_{i,0}\cdot e_\be=\<\be,\al_i\> e_\be\quad\te{for }\be\in Q.
\end{align*}
For $i=0,1$, define a linear operator $z^{h_{i,0}}:\C_\hbar\{Q\}\to\C_\hbar\{Q\}[[z,z\inv]]$ by
\begin{align*}
  z^{h_{i,0}}\cdot e_\be=z^{\<\beta,\al_i\>}e_\be\quad\te{for }\be\in Q.
\end{align*}

We define the Fock space
\begin{align}
  V=\symalg\wh\ot\C_\hbar\{Q\}
\end{align}
to be the tensor product  $\mathcal H$-module.
Recall the vertex operators define in \cite{J-KM}:
\begin{align}
  &X_i^\pm(z)=\sum_{n\in\Z}X_{i,n}z^{-n-1}=E_i^-(q^{\mp\half}z)^{\pm 1}E_i^+(q^{\pm\half}z)^{\pm 1}e_{\pm \al_i}z^{\pm h_{i}},
\end{align}
As usual, the normal ordered product $\:X_i^\pm(z)X_j^\pm(w)\;$ is defined by moving annihilation operators
$h_{i,n}$ ($i=0,1$, $n\ge 0$) to the right.
It was proved in \cite[(3.6)]{J-KM} that
\begin{align}\label{eq:locality}
  &X_i^\pm(z)X_j^\pm(w)=\:X_i^\pm(z)X_j^\pm(w)\;(z-q^{\mp 1}w)_{q^2}^{a_{ij}},\\
  &\:X_i^\pm(z)X_j^\pm(w)\;=(-1)^{a_{ij}}\:X_j^\pm(w)X_i^\pm(z)\;
  =\:X_j^\pm(w)X_i^\pm(z)\;,
\end{align}
where for $n\in\N$,
\begin{align*}
  &(1-x)_{q^2}^n=(1-q^{1-n}x)(1-q^{3-n}x)\cdots(1-q^{n-1}x),\\
  &(1-x)_{q^2}^{-n}=1/(1-x)_{q^2}^n,\quad
  (z-w)_{q^2}^{\pm n}=(1-w/z)_{q^2}^{\pm n}z^{\pm n},
\end{align*}
and $(1-x)_{q^2}^{-n}$ is understood as power series in $x$.
We have:

\begin{thm}\label{thm:vr}
There is a $\U$-module structure on the Fock space $V$  with the action given by
\[c\mapsto 1,\quad h_{i,n}\mapsto h_{i,n}\quad\te{and}\quad x_{i,n}^\pm\mapsto X_{i,n}^\pm\]
where  $i=0,1$ and $n\in\Z$.
\end{thm}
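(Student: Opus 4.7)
Since \cite{J-KM} already establishes that the operators $X_i^\pm(z)$ on $V$ satisfy every defining relation of $\U_\hbar(\wh\g)$---namely (Q1)--(Q7) and (Q9)---and since $\U$ is obtained from $\U_\hbar(\wh\g)$ by imposing exclusively the extra relation (Q8), the theorem reduces to verifying that (Q8) holds among the vertex operators on $V$. We shall treat the $+$ case; the $-$ case follows by a parallel computation (with $q^{-1}$ replaced by $q$). Fix $i\ne j$ in $\{0,1\}$ and set
\[
A(z_2,w):=(z_2-q^{-2}w)X_i^+(z_2)X_j^+(w)-(q^{-2}z_2-w)X_j^+(w)X_i^+(z_2).
\]

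The first step is to collapse $A(z_2,w)$ into a delta-function current. By \eqref{eq:locality} with $a_{ij}=-2$, $X_i^+(z_2)X_j^+(w)$ equals $\:X_i^+(z_2)X_j^+(w)\;$ times the expansion of $1/[(z_2-q^{-2}w)(z_2-w)]$ in nonnegative powers of $w/z_2$, while the reversed product equals the same normal-ordered factor (by the symmetry $\:X_i^+(z_2)X_j^+(w)\;=\:X_j^+(w)X_i^+(z_2)\;$) times the analogous expansion in $z_2/w$. Multiplying by the polynomial prefactors $(z_2-q^{-2}w)$ and $(q^{-2}z_2-w)$ respectively, and invoking
\[
\iota_{z_2,w}\frac{1}{z_2-w}-\iota_{w,z_2}\frac{1}{z_2-w}=z_2^{-1}\delta(w/z_2),
\]
we obtain $A(z_2,w)=\:X_i^+(z_2)X_j^+(w)\;\cdot z_2^{-1}\delta(w/z_2)=F(z_2)\cdot z_2^{-1}\delta(w/z_2)$, where $F(z_2):=\:X_i^+(z_2)X_j^+(z_2)\;$ and the last equality uses the standard $\delta$-function substitution property.

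It now suffices to prove $[X_i^+(z_1),F(z_2)]=0$. By Wick's theorem,
\[
X_i^+(z_1)\cdot F(z_2)=\:X_i^+(z_1)X_i^+(z_2)X_j^+(z_2)\;\cdot(z_1-q^{-1}z_2)_{q^2}^{a_{ii}+a_{ij}},
\]
with the contraction factor expanded in $z_2/z_1$, while $F(z_2)\cdot X_i^+(z_1)$ equals the same triple normal-ordered product times $(z_2-q^{-1}z_1)_{q^2}^{a_{ii}+a_{ji}}$ expanded in $z_1/z_2$. The crucial feature of affine type $A_1^{(1)}$ is the cancellation $a_{ii}+a_{ij}=a_{ii}+a_{ji}=2-2=0$: both contraction factors therefore equal $1$ independently of expansion region, so the two orderings coincide. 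Hence $[X_i^+(z_1),F(z_2)]=0$, and therefore $[X_i^+(z_1),A(z_2,w)]=0$, which is precisely (Q8) in the $+$ case. The main---and essentially only---subtlety is this vanishing of the contraction exponent, which is exactly the feature that makes the new relation (Q8) hold on the vertex operators while the stronger relation \eqref{strongerrela} satisfied in $\U_\hbar(\dot\g_{tor})$ does not.
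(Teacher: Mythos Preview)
Your proof is correct and follows essentially the same route as the paper: reduce to (Q8), collapse the inner expression to $\:X_i^\pm(w)X_j^\pm(w)\;\,z^{-1}\delta(w/z)$ via \eqref{eq:locality}, and then show this commutes with $X_i^\pm(z_1)$; the paper records the last step as the identity \eqref{eq:Q9-temp2} (``straightforward to check''), whereas you spell out the reason, namely that the two contractions $(z_1-q^{\mp 1}z_2)_{q^2}^{a_{ii}}$ and $(z_1-q^{\mp 1}z_2)_{q^2}^{a_{ij}}$ are reciprocals. One small notational caveat: writing the combined contraction as $(z_1-q^{-1}z_2)_{q^2}^{a_{ii}+a_{ij}}$ tacitly assumes $(z-w)_{q^2}^{m}(z-w)_{q^2}^{n}=(z-w)_{q^2}^{m+n}$, which is not a general identity for these $q$-powers---but it does hold here precisely because $a_{ij}=-a_{ii}$ and $(1-x)_{q^2}^{-n}$ is \emph{defined} as the reciprocal of $(1-x)_{q^2}^{n}$, so your conclusion stands.
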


\begin{proof}
The relations (Q1-Q7) and (Q9) have been checked in  \cite[Theorem 3.1]{J-KM} and it remains to prove
 the relation (Q8).
For $i\ne j\in \{0,1\}$,
it follows from  \eqref{eq:locality} that
\begin{align}
  X_{ij}^\pm(z,w)=
  &(z-q^{\mp 2}w)X_i^\pm(z)X_j^\pm(w)-(q^{\mp 2}z-w)X_j^\pm(w)X_i^\pm(z)
  \nonumber\\
  =&\:X_i^\pm(z)X_j^\pm(w)\;z\inv\delta\(\frac{w}{z}\)
  =\:X_i^\pm(w)X_j^\pm(w)\;z\inv\delta\(\frac{w}{z}\).\label{eq:Q9-temp1}
\end{align}
It is straightforward to check that
\begin{align}
  &X_i^\pm(z)\:X_i^\pm(w)X_j^\pm(w)\;
  =\:X_i^\pm(z)\:X_i^\pm(w)X_j^\pm(w)\;\;\nonumber\\
  =&\:\:X_i^\pm(w)X_j^\pm(w)\;X_i^\pm(z)\;
  =\:X_i^\pm(w)X_j^\pm(w)\;X_i^\pm(z).\label{eq:Q9-temp2}
\end{align}
Then it follows from \eqref{eq:Q9-temp1} and \eqref{eq:Q9-temp2} that (Q8) holds on $V$, as required.
\end{proof}

\begin{rem}{\em From \eqref{eq:locality}, it follows that for $i\ne j\in \{0,1\}$,
\[(z-q^{\mp 2}w)X_i^\pm(z)X_j^\pm(w)\ne (q^{\mp 2}z-w)X_j^\pm(w)X_i^\pm(z).\]
In particular,  the action given in Theorem \ref{thm:vr} can not induce a $\U_\hbar(\dot\g_{tor})$-module structure on $V$.
}\end{rem}

\bigskip

\end{document}